\documentclass{article}

\usepackage{amsmath,amsfonts,amsthm,wasysym}
\usepackage{setspace}
\usepackage{tikz}
\usepackage{comment}

\setlength{\topmargin}{-0.6in}
\setlength{\textheight}{9.0in}
\setlength{\oddsidemargin}{0.0in}
\setlength{\textwidth}{6.5in}

\newtheorem{thm}{Theorem}

\newtheorem{cnj}[thm]{Conjecture}
\newtheorem{cor}[thm]{Corollary}
\newtheorem{fct}[thm]{Fact}
\newtheorem{lem}[thm]{Lemma}

\def\a{{\alpha}}
\def\b{{\beta}}

\def\d{{\delta}}

\def\i{{\iota}}
\def\k{{\kappa}}

\def\p{{\pi}}

\def\s{{\sigma}}

\def\cP{{\cal P}}

\def\hp{{\hat{\p}}}

\def\dD{{\dot{D}}}
\def\dP{{\dot{P}}}

\def\cost{{\sf cost}}
\def\diam{{\sf diam}}
\def\dist{{\sf dist}}
\def\ecc{{\sf ecc}}
\def\pot{{\sf pot}}
\def\supp{{\sf supp}}

\def\rar{{\rightarrow}}

\definecolor{brwn}{RGB}{140, 70, 20}
\definecolor{gren}{RGB}{  0,140, 10}

\pgfmathsetmacro\Radi{10}
\pgfmathsetmacro\radi{5}

\begin{document}

\title{On the Target Pebbling Conjecture}

\author{
Glenn Hurlbert \thanks{
Department of Mathematics and Applied Mathematics,
Virginia Commonwealth University, 
1015 Floyd Ave, Richmond, VA 23220 USA
}
\footnote{ghurlbert@vcu.edu}
\and
Essak Seddiq \footnotemark[1]
\footnote{seddiqe@mymail.vcu.edu}
}
\maketitle
\date{}

\doublespacing

\begin{abstract}
Graph pebbling is a network optimization model for satisfying vertex demands with vertex supplies (called pebbles), with partial loss of pebbles in transit.
The pebbling number of a demand in a graph is the smallest number for which every placement of that many supply pebbles satisfies the demand.
The Target Conjecture (Herscovici-Hester-Hurlbert, 2009) posits that the largest pebbling number of a demand of fixed size $t$ occurs when the demand is entirely stacked on one vertex.
This truth of this conjecture could be useful for attacking many open problems in graph pebbling, including the famous conjecture of Graham (1989) involving graph products.
It has been verified for complete graphs, cycles, cubes, and trees.
In this paper we prove the conjecture for 2-paths and Kneser graphs over pairs.
\end{abstract}

\begin{quote}
    {\bf Key words:}
    graph pebbling, target conjecture, pebbling configuration, target distribution, 2-path, Kneser graph
\end{quote}
\begin{quote}
    {\bf 2010 MSC:}
    05C57 (05C35, 90B06)
\end{quote}

\newpage

% ================================
% ================================
\section{Introduction}
Graph pebbling of the type we study here began as a method for proving a number-theoretic conjecture of Erd\H{o}s and Lemke (see \cite{Chung}), which was further applied to prove a group-theoretic conjecture of Kleitman and Lemke (see \cite{ElleHurl}), as well as to prove a result in $2$-adic analysis (see \cite{Knapp1,Knapp2}).
It has since grown into a network optimization model for satisfying vertex demands with vertex supplies (called pebbles), with partial loss of pebbles in transit.
While there are a number of different pebbling games on graphs, with a wide range of rules and applications (see \cite{GilLenTar,GureShel,HopPauVal,KiroPapa,Klawe,Liu,Parso,PateHewi,Sethi,StreTher}), this version is defined as follows.

For a finite, connected graph $G=(V,E)$, a {\it configuration} (or {\it supply}) $C$ is a non-negative integer-valued function on $V$, with {\it size} $|C|=\sum_{v\in V}C(v)$.
That is, $C(v)$ represents the number of pebbles on the vertex $v$, while $|C|$ denotes the total number of pebbles on $V$.
For a vertex $v$ such that $C(v)>0$, we define the configuration $C-v$ by $(C-v)(v)=C(v)-1$ and $(C-v)(u)=C(u)$ for all $u\not= v$.
Similarly, a {\it target distribution} (or {\it demand}) $D$ is a non-negative integer-valued function on $V$, with {\it size} $|D|=\sum_{v\in V}D(v)$.
That is, $D(v)$ equals the number of pebbles required to eventually place on the vertex $v$, while $|D|$ denotes the total demand of pebbles on $V$.
The notation $D-v$ is defined similarly: $(D-v)(v)=D(v)-1$ and $(D_v)(u)=D(u)$ for all $u\not=v$.
For a target $D$, define $\dD$ to be the multiset $\{v^{D(v)}\}_{v\in V}$.
($D(v)$ is the {\it multiplicity} of $v$ in $\dD$.)

For a configuration $C$, a {\it pebbling step} from $u$ to an adjacent vertex $v$ removes two pebbles from $u$ and places one of those pebbles on $v$; the resulting configuration $C'$ is defined by $C'(u)=C(u)-2$, $C'(v)=C(v)+1$, and $C'(w)=C(w)$ otherwise.
We say that $C$ is $D$-{\it solvable} (or that $C$ {\it solves} $D$) if $C$ can be converted via pebbling steps to a configuration $C^*$ such that $C^*(v)\ge D(v)$ for all $v\in V$; $C$ is $D$-{\it unsolvable} otherwise.
A sequence of such pebbling steps is called a $D$-{\it solution} ($r$-{\it solution} in the case that $\dD=\{r\}$).
Suppose that $r$ has $D(r)>0$ and that $\s$ is an $r$-solution given by the sequence of pebbling steps $\s_1, \ldots, \s_k$.
For each $1\le i\le k$, denote by $C^{(i)}$ the configuration resulting from $C^{(i-1)}$ via the pebbling step $\s_i$ (where $C^{(0)}=C$).
Then we define the configuration $C-\s=C^{(k)}-r$.
That is, $C-\s$ is the configuration resulting from removing all pebbles involved in the $r$-solution $\s$.
Thus, if $C-\s$ solves $D-r$, then $C$ solves $D$.

The {\it pebbling number of a demand $D$ in a graph $G$} is denoted $\p(G,D)$ and defined to be the smallest $m$ such that every configuration of size $m$ is $D$-solvable.
In the case that $|D|=1=D(r)$, we simply write $\p(G,r)$.
When $|D|=t=D(r)$ we say that $D$ is {\it stacked} (on $r$); in this case we may write that $C$ $t$-{\it fold solves} $r$ instead of that $C$ solves $D$, and use the notation $\p_t(G,r)=\p(G,D)$.
The $t$-{\it fold pebbling number of} $G$ is defined as $\p_t(G)=\max_{r\in V}\p_t(G,r)$; if $t=1$ we omit the subscript and avoid writing ``1-fold''.
As with many fractional analogues of graph theoretical invariants (chromatic number, clique number, matching number, etc. --- see \cite{ScheUllm}), the {\it fractional pebbling number} is defined to be $\hp(G)=\liminf_{n\rar\infty}\p_t(G)/t$.
It was proved in \cite{HerHesHurTpebb,HoMaOkZu} that $\hp(G)=2^{\diam(G)}$ for every graph $G$.

The original application of graph pebbling only involved the case $t=1$.
However, in \cite{Chung} the problem was immediately generalized so that the parameter $t$ could be used in an inductive manner to prove results on trees.
Similarly, the problem was further expanded to more general $D$\footnote{We note that the $D$-pebbling number was first introduced in \cite{CCFHPST} for the case $D(v)\ge 1$ for all $v\in V(G)$, and was called the {\it cover pebbling number}.
In that paper, the authors prove for trees that the largest $D$-unsolvable configuration is obtained by stacking the entire configuration on a single vertex --- the result for all graphs was proved in \cite{Sjos}.
It is then easy to calculate the size of such a configuration for each vertex, discern which vertex has the largest stack, and therefore derive the cover pebbling number.} in \cite{HerHesHurTpebb}, wherein is found the following Target Conjecture.

\begin{cnj}
\label{c:tTarget}
{\bf (Target Conjecture)}
\cite{HerHesHurTpebb}
Every graph $G$ satisfies $\p(G,D)\le \p_{|D|}(G)$ for every target distribution $D$.
\end{cnj}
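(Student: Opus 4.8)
The plan is to reduce the conjecture to a monotonicity property of maximum unsolvable configurations and then to try to establish that property by local ``concentration'' moves on the target. First I would record the complementary reading of the definitions: since $\p(G,D)$ is the least $m$ for which every size-$m$ configuration solves $D$, and since adding a pebble to a solvable configuration keeps it solvable, the number $\p(G,D)-1$ is exactly the size of the largest $D$-unsolvable configuration. Hence the inequality $\p(G,D)\le\p_{|D|}(G)=\max_{r\in V}\p_{|D|}(G,r)$ is equivalent to the statement that no $D$-unsolvable configuration is larger than the largest stacked-unsolvable configuration, taken over the best choice of stacking vertex $r$. So it would suffice, given a $D$-unsolvable configuration $C$ of maximum size, to exhibit a vertex $r$ and an $r$-stacked-unsolvable configuration of size at least $|C|$.

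Next I would set up an induction on $t=|D|$. The base case $t=1$ is immediate, since then $D$ is a single target $r$ and $\p(G,D)=\p(G,r)\le\p_1(G)$. For the inductive step I would use the mechanism isolated in the excerpt --- that ``if $C-\s$ solves $D-r$ then $C$ solves $D$'' --- by choosing a vertex $r$ in the support of $D$ together with an $r$-solution $\s$, and arguing that whenever $|C|\ge\p_t(G)$ the residual configuration $C-\s$ still contains enough pebbles, in the right positions, to solve the smaller demand $D-r$ of size $t-1$, at which point the inductive hypothesis $\p(G,D-r)\le\p_{t-1}(G)$ would apply. The delicate point is that an $r$-solution may consume almost all of $C$, so there is no fixed ``cost of one target at $r$'' to subtract; one must instead choose $\s$ as economically as possible and track the surviving pebbles carefully.

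The heart of any general argument would be a local-move lemma asserting that replacing $D$ by the demand $D'$ obtained by shifting one unit of demand from a vertex $u$ to a suitably chosen neighbor $v$ never decreases the largest unsolvable size, i.e.\ $\p(G,D)\le\p(G,D')$. Granting such a monotonicity, iterating the move would concentrate any $D$ onto a single vertex without ever lowering the pebbling number, and the resulting stacked demand would certify the bound, completing the proof.

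I expect this last step to be the genuine obstacle, and to be the reason the statement remains a conjecture rather than a theorem. For general $G$ there is no known combinatorial characterization of which configurations are $D$-unsolvable: the weight-function and tree-strategy lower bounds that certify unsolvability are tight only for special classes such as trees, so the concentration move cannot be checked by a uniform local computation. Worse, merging demand can make some adversarial placements harder to solve while making others easier, and controlling this trade-off seems to require the explicit extremal structure available only family by family. This is exactly why the established verifications (complete graphs, cycles, cubes, trees) and the new cases treated in this paper (2-paths and Kneser graphs over pairs) all proceed through family-specific formulas for $\p(G,D)$ and $\p_{|D|}(G,r)$ rather than through a single general reduction.
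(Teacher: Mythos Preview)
You have correctly recognized that the statement is a \emph{conjecture}: the paper does not prove it in general, so there is no general proof to compare your proposal against. Your final two paragraphs make this explicit, and your diagnosis --- that the ``concentration'' local-move lemma is the missing ingredient and that all known verifications proceed family by family --- is accurate.

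That said, your middle paragraph (the induction on $t$ via an economical $r$-solution $\sigma$) is precisely the mechanism the paper uses to verify the conjecture for 2-paths. There the role of ``choose $\sigma$ as economically as possible'' is played by the Cheap Lemma: one shows that any configuration of size at least $\pi_2(G)$ admits an $r$-solution of cost at most $2^{\ecc(r)}\le 2^d$, and since $\pi_t(G)=\pi_{t-1}(G)+2^d$ for 2-paths, the residual $C-\sigma$ has size at least $\pi_{t-1}(G)$ and induction applies to $D-r$. So the ``delicate point'' you flag --- that a solution might consume too many pebbles --- is handled in that family not by tracking positions of surviving pebbles but simply by bounding the cost of one solution uniformly by the increment $\pi_t(G)-\pi_{t-1}(G)$. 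For the Kneser graphs $K(m,2)$ the paper uses a different, more hands-on argument (potentials, slides, and Menger-type path systems) rather than a clean cheap-solution induction.

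Your proposed local-move lemma (shift one unit of demand from $u$ to a neighbor $v$ and never decrease $\pi(G,\cdot)$) is not used anywhere in the paper and, as you say, is not known to hold; even if true it would need a choice of direction at each step to guarantee the process terminates at a vertex $r$ realizing $\pi_{|D|}(G)$. So your write-up is an honest outline of why the problem is open rather than a proof, and it aligns with the paper's own stance.
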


The authors of \cite{HerHesHurTpebb} verified this conjecture for trees, cycles, complete graphs, and cubes.
The generalization to $D$ has proved useful in obtaining results inductively on powers of paths (see \cite{AlcoHurl}).
The hope is that it may be a powerful tool more generally, for example on chordal graphs, for which it has been conjectured that the pebbling numbers of chordal graphs of a certain type can be calculated in polynomial time (see \cite{AlcGutHur}).
Furthermore, one might suspect that the use of general targets could be helpful in attacking the famous conjecture of Graham (see \cite{Chung}) that $\p(G\Box H)\le\p(G)\p(H)$, where $\Box$ denotes the cartesian product of graphs.
Herscovici, et al. \cite{HerHesHurGraham}, generalize this to conjecture that $\p(G_1\Box G_2,D_1\times D_2)\le\p(G,D_1)\p(H,D_2)$.
The truth of Conjecture \ref{c:tTarget} may prove to be a useful tool in this direction.

In this paper we verify Conjecture \ref{c:tTarget} in Theorem \ref{t:2PathTarget} for the family of 2-paths, defined in Section \ref{s:2Paths}, and in Theorem \ref{t:Kneser} for the family of Kneser graphs $K(m,2)$, defined in Section \ref{s:Kneser}.

% ---------------------------
\subsection{Preliminaries}
\label{s:Prelim}

Before beginning, we introduce a few key concepts used in the proofs.
For a fixed vertex $r$ denote by $V_i(r)$ the set of all vertices at distance $i$ from $r$.
For a configuration $C$ we define a vertex $v$ to be a {\it zero} of $C$ if $C(v)=0$, and denote the number of zeros of $C$ by $z(C)$.
In addition, we define the {\it support} of $C$ ($\supp(C)$) to be the set of vertices $v$ with $C(v)>0$, and denote $s(C)=|\supp(C)|$.
Note that $|V| = s(C) + z(C)$ since every vertex either has at least one pebble or none at all.
Furthermore, we define the {\it potential} of $C$: $\pot(C)=\sum_{v\in V}\lfloor C(v)/2\rfloor$.
This equals the number of pairwise disjoint pairs of pebbles with pebbles of the same pair sitting on the same vertex (each such pair is called {\it a potential}); in other words, it is the total number of initial pebbling steps that can be made from $C$ (with pebbles on the same vertex being indistinguishable).

\begin{lem}
\label{l:potlem}
{\bf (Potential Lemma)}
Let $C$ be a configuration on a graph $G$ with potential $\pot(C)$.
Then the following hold.
\begin{enumerate}
    \item 
    \label{potlem1}
    $\pot(C)\ge\left\lceil\frac{|C|-|V|+z(C)}{2}\right\rceil$.
    \item
    \label{potlem2}
    If $C$ solves a distribution $D$ and $\supp(C)\cap \supp(D)=\emptyset$ then $\pot(C)\ge |D|$.
\end{enumerate}
\end{lem}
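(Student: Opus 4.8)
The two parts are of quite different character, so I would treat them separately. For part~\ref{potlem1} I would estimate $\pot(C)=\sum_{v\in V}\lfloor C(v)/2\rfloor$ termwise: a vertex $v$ with $C(v)\ge 1$ contributes $\lfloor C(v)/2\rfloor\ge (C(v)-1)/2$, while a zero contributes exactly $0$. Summing over $V$ gives $\pot(C)\ge \frac12(|C|-s(C))$, and since $s(C)=|V|-z(C)$ this is $\frac12(|C|-|V|+z(C))$; as $\pot(C)$ is a nonnegative integer the ceiling may be inserted at no cost. (The point of splitting off the zeros, rather than using $\lfloor x/2\rfloor\ge(x-1)/2$ at every vertex, is precisely that it recovers the $+z(C)$ in the numerator.)

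For part~\ref{potlem2} the idea is to read off $|D|$ pairwise disjoint potentials of $C$ straight from a solution. Fix a $D$-solution converting $C$ into some $C^{*}$ with $C^{*}\ge D$, and record its ancestry as a rooted binary forest $F$: the leaves of $F$ are the $|C|$ pebbles of $C$; each pebbling step $u\to v$ in the solution becomes an internal node whose two children are the two pebbles it consumes at $u$; and the roots of $F$ are exactly the pebbles of $C^{*}$ (a pebble of $C$ that is never moved being simultaneously a leaf and a root). For every $v\in\supp(D)$ the configuration $C^{*}$ holds at least $D(v)$ pebbles at $v$, and since $\supp(C)\cap\supp(D)=\emptyset$ forces $C(v)=0$, none of these is a leaf of $F$. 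Selecting $D(v)$ of them for each $v$ yields $|D|$ distinct roots lying in $|D|$ pairwise vertex-disjoint trees $T_1,\dots,T_{|D|}$ of $F$, each with at least two leaves. I would then show, by induction on $|T_i|$, that each such tree contains two leaves lying on the same vertex of $G$: the root of $T_i$ is a step $u\to v$ whose two children both lie on $u$; either both are leaves, in which case they are two pebbles of $C$ on $u$ --- a potential --- or one child is the root of a strictly smaller tree of the same kind, to which induction applies. Because the $T_i$ are leaf-disjoint, the potentials so obtained are pairwise disjoint pairs of pebbles of $C$; grouping them by the vertex on which they sit gives $\sum_{v}\lfloor C(v)/2\rfloor\ge |D|$, i.e. $\pot(C)\ge|D|$.

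The step I expect to require the most care is the honest construction of the ancestry forest --- making precise that every pebble of $C^{*}$ is the root of a well-defined binary tree over pebbles of $C$, without conflating ``pebble instances'' across the configurations along the solution. A route that sidesteps the forest is induction on $|D|$: choose $r\in\supp(D)$, extract from the $D$-solution a minimal $r$-solution $\s$ for which $C-\s$ solves $D-r$, observe that minimality forces $\supp(C-\s)\subseteq\supp(C)$ (so the disjoint-support hypothesis passes to $D-r$) and that the final step of $\s$, landing on the empty vertex $r$, lowers the potential by $1$, whence $\pot(C)\ge\pot(C-\s)+1\ge|D-r|+1=|D|$. This rests, however, on the (standard but not wholly trivial) extractability of such an $\s$ from a given $D$-solution, so I would most likely present the forest argument as the primary proof and merely remark on the inductive alternative.
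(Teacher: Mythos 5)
Your proposal is correct and follows essentially the same route as the paper: part (\ref{potlem1}) is exactly the paper's inequality $|C|\le 2\pot(C)+s(C)$ combined with $s(C)=|V|-z(C)$, and part (\ref{potlem2}) is a careful formalization (via the ancestry forest) of the paper's one-sentence observation that each pebble delivered to an initially empty target must trace back to a distinct potential of $C$. The only difference is one of detail: the paper dispatches part (\ref{potlem2}) in a single sentence, whereas you supply the disjointness bookkeeping explicitly, which is a faithful and rigorous elaboration rather than a different argument.
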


\begin{proof}
Part (\ref{potlem1}) is a simple consequence of the relation $|C|\le 2\pot(C)+s(C)$.
Part (\ref{potlem2}) holds since placing a pebble on a target requires a potential to make a pebbling step.
\end{proof}

Finally, for a configuration $C$, a path $v_1,\ldots,v_k$ is called a ${\it slide}$ if $C(v_1)\ge 2$ and $C(v_i)\ge 1$ for all $1<i<k$.
In addition, for a single target $r$, we define the {\it cost}, $\cost(\s)$, of an $r$-solution $\s$ to be the number of pebbles discarded by $\s$, including the pebble placed on $r$; thus it equals one more than the number of pebbling steps of $\s$.
For example, for $r=v_k$ in the slide above, the cost of the solution that moves a pebble from $v_1$ to $r$ equals $k$.
Consequently, the resulting configuration $C'$ of pebbles remaining to use to solve other targets has size $|C'|=|C|-\cost(\s)$.
We define an $r$-solution to be {\it cheap} if its cost is at most $2^{\ecc(r)}$; a configuration is $r$-{\it cheap} if it has a cheap $r$-solution.
A {\it cost}-$k$ solution is simply a solution of cost exactly $k$.
We say that a graph $G$ is $r$-{\it (semi)greedy} if every configuration of size at least $\pi(G,r)$
has a (semi)greedy $r$-solution; that is, every pebbling step in the solution decreases (does not increase) the distance of the moved pebble to $r$.
It is known, for example (see \cite{AlcoHurl}), that trees are greedy and that chordal graphs are semi-greedy.

\begin{lem}
\label{l:CheapLemma}
{\bf (Cheap Lemma)}
\cite{AGHS2T}
Given the graph $G$ with target $r$ let $H$ be an $r$-greedy spanning subgraph of $G$ preserving distances to $r$.
Then any configuration of $G$ of size at least $\pi(H,r)$ is $r$-cheap.
\end{lem}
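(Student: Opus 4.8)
The plan is to manufacture the cheap solution inside $H$, exploiting the fact that greedy pebbling steps interact transparently with the distance-to-$r$ function, and then to observe that a solution in $H$ is automatically a solution in $G$ of the same cost. First, let $C$ be any configuration on $G=(V,E)$ with $|C|\ge\pi(H,r)$. Since $H$ is spanning, $C$ is literally a configuration on $V(H)=V$ of the same size, so $|C|\ge\pi(H,r)$ forces $C$ to be $r$-solvable in $H$; since $H$ is $r$-greedy, $C$ has a greedy $r$-solution in $H$, and I would fix such a solution $\s$ using as few pebbling steps as possible. If $C(r)\ge 1$ then the empty solution has cost $1\le 2^{\ecc(r)}$ and there is nothing to prove, so assume $C(r)=0$.

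Next I would read the structure of $\s$ off the distance function. Because $H$ preserves distances to $r$ we have $\dist_H(\cdot,r)=\dist_G(\cdot,r)$, hence $\ecc_H(r)=\ecc_G(r)=\ecc(r)$, and every pebbling step of $\s$ moves a pebble from a vertex at distance $i$ to a vertex at distance $i-1$ for some $i\ge 1$. Build the rooted tree $T$ on the pebbles touched by $\s$: the root is the pebble that $\s$ delivers to $r$; each pebble created by a step $u\to v$ is joined to the two pebbles spent on $u$ to create it; and the leaves are the original pebbles of $C$ that $\s$ actually consumes. Minimality of $\s$ is exactly what guarantees that no created pebble is wasted (otherwise delete the step creating it) and that exactly one pebble reaches $r$, so $T$ is a single full binary tree. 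A pebble at tree-depth $j$ sits at distance $j$ from $r$, and all of $V$ lies within distance $\ecc(r)$ of $r$, so $T$ has depth at most $\ecc(r)$ and therefore at most $2^{\ecc(r)}$ leaves. The steps of $\s$ biject with the internal nodes of $T$, so if $T$ has $L$ leaves then $\s$ has $L-1$ steps and $\cost(\s)=L\le 2^{\ecc(r)}$ (equivalently, $\cost(\s)$ equals the number of original pebbles $\s$ spends). Thus $\s$ is a cheap $r$-solution of $C$ in $H$, and since $E(H)\subseteq E(G)$ the very same sequence of steps is a cheap $r$-solution of $C$ in $G$; so $C$ is $r$-cheap.

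The step I expect to be the main obstacle is the minimality bookkeeping: showing that a greedy $r$-solution with the fewest steps spends no created pebble uselessly, so that its pebble-dependency structure is one full binary tree rather than a forest with dangling pebbles, and that when $C(r)=0$ exactly one pebble is delivered to $r$. Once that is pinned down, the depth bound, the leaf count, and the cost tally are all immediate. A route that sidesteps most of this is a weight argument: assign each pebble on a vertex $v$ the weight $2^{-\dist(v,r)}$, note that a greedy pebbling step neither creates nor destroys total weight, deduce that in a minimal greedy solution the original pebbles spent carry total weight exactly $1$, and then maximize their number subject to this equality and to all distances being at most $\ecc(r)$, which again gives the bound $2^{\ecc(r)}$.
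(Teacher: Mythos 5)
Your argument is correct, and it is essentially the standard proof of this lemma (which the paper itself only imports from \cite{AGHS2T} without proof): restrict to $H$, extract a minimal greedy solution, and bound its cost by the leaf count of the full binary dependency tree of pebbles, whose depth is at most $\ecc(r)$ because each greedy step decreases the (distance-preserved) distance to $r$ by exactly one. The minimality bookkeeping you flag does go through --- deleting a step whose output pebble is neither consumed nor the delivered pebble leaves a shorter valid greedy solution, so a step-minimal solution has exactly one full binary tree and no dangling created pebbles --- and your weight-function variant with weights $2^{-\dist(v,r)}$ is an equally valid way to finish.
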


In particular, a breadth-first-search spanning tree can play the role of $H$ in the Cheap Lemma.
In the case of 2-paths, below, we use a caterpillar as the best choice of such a tree.
A {\it caterpillar} is a tree that contains a path $P$ such that every vertex not on $P$ is adjacent to some vertex on $P$.

% ================================
% ================================
\section{2-Paths}
\label{s:2Paths}

% ---------------------------
\subsection{Definition and notation}

A {\it simplicial} vertex is a vertex whose set of neighbors forms a complete graph. 
A {\it chordal} graph is a graph with no induced cycle of length 4 or more.
A $k$-{\it path} is either a complete graph $K_k$ or $K_{k+1}$, or a graph $G$ with exactly two simplicial vertices $u$ and $v$ such that the neighborhood of $v$ is $K_k$ and $G-v$ is a $k$-path.

Let $G$ be a chordal graph with two simplicial vertices, which we denote $r$ and $s$, with the shortest path $P=(x_0,x_1,\ldots,x_d)$ between them, where $d = \dist(r,s) = \diam(G)$ ($r = x_0, s = x_d$). 
We call this path the {\it spine} of $G$.
We define a {\it fan} $F$ to be a subgraph of $G$ which consists of a path $Q = a, v_1, \ldots, v_k, c$, and an additional vertex $b$ which is adjacent to every vertex of $Q$ and where $a, b, c$ form a subpath of $P$ (e.g. $a=x_{i-1}$, $b=x_i$, and $c=x_{i+1}$ for some $0<i<d$).
We say the $F$ is an $ac$-{\it fan} with fan vertices $F' = \{v_1, \ldots, v_k\}$.
Then $G$ is an {\it overlapping fan graph} if, for every $0< i < d$, there is an $x_{i-1}x_{i+1}$-fan $F_i$ in $G$, every vertex of $G$ is in some fan of $G$, and $|F'_i \cap F'_{i+1}| \le 1$.
An example of a fan graph is seen in left diagram of Figure \ref{fig:fan}, where $d=4$, $k_1 = 4$, $k_2 = 3$ and $k_3 = 3$.

By Lemma 2.1 of \cite{AGHP2P}, every 2-path is an overlapping fan graph (and vice-versa).
In \cite{AGHP2P} we find the following result.
\begin{thm}
\label{t:2Path}
If $G$ is a 2-path with $\diam(G)=d$ then $\p_t(G)=t2^d+n-2d$.
\end{thm}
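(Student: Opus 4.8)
The plan is to prove the two inequalities $\p_t(G)\ge t2^d+n-2d$ and $\p_t(G)\le t2^d+n-2d$ separately, where $n=|V(G)|$. I use throughout the description of $G$ as an overlapping fan graph with spine $P=(x_0,\ldots,x_d)$ and simplicial endpoints $x_0,x_d$, writing $F_i$ for the $x_{i-1}x_{i+1}$-fan and $F_i'$ for its set of fan vertices.

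For the lower bound I would exhibit a target and a $t$-fold-unsolvable configuration of size $t2^d+n-2d-1$. Take the target to be $x_0$, and let $C$ put $t2^d-1$ pebbles on $x_d$ and one pebble on each vertex of a set $S$ of $n-2d$ fan vertices, where $S$ omits exactly one fan vertex from each $F_i$; such a system of distinct representatives exists because $F_i'\ne\emptyset$ for every $i$ (as $x_{i-1}\not\sim x_{i+1}$, the spine being geodesic) and consecutive fans share at most one vertex. Then $|C|=t2^d+n-2d-1$, and the claim is that $C$ does not $t$-fold solve $x_0$: the stack on $x_d$ alone delivers only $\lfloor(t2^d-1)/2^d\rfloor=t-1$ pebbles to $x_0$; an isolated pebble on a fan vertex cannot move; and, because each $F_i$ is missing one of its path vertices, no accumulation of pebbles along a fan path can shortcut the flow from $x_{i+1}$ past $x_i$ to $x_{i-1}$, so the lone pebbles cannot be combined to cover the deficit of one pebble at $x_0$. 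Making this last point rigorous---e.g.\ with a weight argument that tracks $\sum_vC(v)2^{-\dist(v,x_0)}$ alongside which pebbling steps are actually available---is the technical heart of the lower bound.

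For the upper bound I would first reduce to the case $t=1$ by induction on $t$. Suppose $t\ge2$, let $|C|\ge t2^d+n-2d$, and fix a target $r$. Take a breadth-first spanning tree $H$ of $G$ rooted at $r$ (chosen to be a caterpillar when possible); being a tree, $H$ is $r$-greedy, and it preserves distances to $r$. A short computation of $\p(H,r)$ from a maximum path partition of $H$ gives $\p(H,r)\le 2^{d+1}+n-2d\le|C|$, so by the Cheap Lemma $C$ has an $r$-solution $\s$ with $\cost(\s)\le 2^{\ecc(r)}\le 2^d$. Then $|C-\s|\ge(t-1)2^d+n-2d$, so by induction $C-\s$ solves the demand of $t-1$ pebbles on $r$, whence $C$ solves the demand of $t$ pebbles on $r$.

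The case $t=1$---i.e.\ that the ordinary pebbling number of a $2$-path is $2^d+n-2d$---is the main obstacle, because a spanning-tree bound is too weak here (the tree $H$ above only gives $2^d+n-d-1$, which exceeds $2^d+n-2d$ once $d\ge2$), so one must use the chords of the spine provided by the fans. I would prove it by a second induction, on $d$, checking $d\le2$ (complete graphs and fans) directly. For $d\ge3$ and target $r$, relabel the spine so that $\dist(r,x_0)\le\dist(r,x_d)$, delete $R:=\{x_d\}\cup(F_{d-1}'\setminus F_{d-2}')$ to obtain a $2$-path $G'$ of diameter $d-1$ on $n'=n-|R|$ vertices with $r\in V(G')$, and push every potential of $C$ that sits on $R$ across to the boundary $\{x_{d-1},x_{d-2}\}\cup(F_{d-1}'\cap F_{d-2}')\subseteq V(G')$ (legal since each vertex of $R$ is adjacent to it); by the Potential Lemma this produces a configuration $C'$ on $G'$ with $|C'|=|C|-\sum_{v\in R}\lceil C(v)/2\rceil$, and a short case check gives $|C'|\ge 2^{d-1}+n'-2(d-1)$, so induction on $G'$ finishes. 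The one place that needs care is when $C$ piles many pebbles onto $R$ (so the bound on $|C'|$ fails); there, though, those pebbles are both heavily concentrated and far from $r$, so instead one drives them straight toward $r$ (a stack of size $m$ at distance $\le d$ from $r$ delivers $\lfloor m/2^d\rfloor$ pebbles to $r$) and, if needed, combines the result with the pebbles already on $G'$. Handling this subcase, together with the two inductions and the lower-bound construction, yields the theorem.
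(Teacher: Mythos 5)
First, a framing point: the paper does not prove this theorem at all --- it is imported verbatim from \cite{AGHP2P} (``In \cite{AGHP2P} we find the following result''), so there is no in-paper proof to compare against. Judged on its own, your proposal has the right extremal configuration, the right two-bound architecture, and a sound reduction from $t\ge 2$ to $t=1$ (the spinal-caterpillar-plus-Cheap-Lemma step is exactly the machinery the present paper builds in Corollary \ref{c:spinal}, Lemma \ref{l:Cheap}, and Theorem \ref{t:2PathTarget}). But both of the genuinely hard components are deferred rather than proved, and in each case the deferral happens precisely where the known proof in \cite{AGHP2P} has to work.

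For the lower bound, you correctly observe that the standard weight function $\sum_v C(v)2^{-\dist(v,x_0)}$ cannot certify unsolvability here: your configuration has weight $t-2^{-d}+\sum_{v\in S}2^{-\dist(v,x_0)}>t$, so weight alone proves nothing, and one must argue combinatorially that the $d-1$ gaps (one per fan) block every useful interaction between the stack on $x_d$ and the $n-2d$ singletons --- including mixed routes that enter a fan path from $x_d$ or from a spine center, partially slide, and re-enter the spine. Saying this is ``the technical heart'' is accurate, but it means the lower bound is asserted, not proved. For the upper bound at $t=1$, the fan-peeling induction is a reasonable plan, but the subcase you set aside is the substantive one: when $C$ places, say, $2^{d-1}+1$ pebbles on each of $x_d$ and a vertex of $F_{d-1}'\setminus F_{d-2}'$, the collapse loses more than the available slack of roughly $2^{d-1}-2$, yet no single stack on $R$ reaches $r$ on its own ($2^{d-1}+1<2^d$), so ``drive them straight toward $r$'' fails and one must combine partial contributions from several vertices of $R$ with the pebbles already on $G'$. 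That combination is where the case analysis of \cite{AGHP2P} lives. As written, then, the proposal is a correct outline with the two crux arguments missing; only the $t\ge 2$ induction step is complete.
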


The point of this section is to prove this bound for all targets $D$ of size $t$, rather than just $t$ targets on the same vertex --- see Theorem \ref{t:2PathTarget}.

\begin{figure}
\begin{center}
\begin{tikzpicture}[scale=1.4]
\tikzstyle{every node}=[draw,circle,fill=black,inner sep=2pt,label distance=0.02cm]
% spine
\draw node (r) [label=left: {$r$}] at (0,0) {};
\draw node (x1) [label=225: {$x_1$}] at (1,0) {};
\draw node (x2) [label=above: {$x_2$}] at (2,0) {};
\draw node (x3) [label=315: {$x_3$}] at (3,0) {};
\draw node (s) [label=right: {$s$}] at (4,0) {};
\draw[line width=1.0pt] (r) -- (x1) -- (x2) -- (x3) -- (s);
% first fan
\def \j {1}
\def \a {3}
\def \A {180/(1+\a)}
  % vertices
\foreach \i in {1,...,\a}
    \def \m {180-\i*\A}
    \draw node [label=\m: {$v_{\j,\i}$}] at ({\j+cos(\m)},{sin(\m)}) {};
  % tire
\foreach \i in {0,...,\a}
    \def \m {180-\i*\A}
    \def \mm {180-(\i+1)*\A}
    \draw[line width=1.0pt] ({\j+cos(\m)},{sin(\m)}) -- ({\j+cos(\mm)},{sin(\mm)});
  % spokes
\foreach \i in {1,...,\a}
    \def \m {180-\i*\A}
    \draw[line width=1.0pt] ({\j+cos(\m)},{sin(\m)}) -- (\j,0);
% second fan
\def \j {2}
\def \a {4}
\def \A {180/(1+\a)}
  % vertices
\foreach \i in {1,...,\a}
    \def \m {180+\i*\A}
    \draw node [label=\m: {$v_{\j,\i}$}] at ({\j+cos(\m)},{sin(\m)}) {};
  % tire
\foreach \i in {0,...,\a}
    \def \m {180+\i*\A}
    \def \mm {180+(\i+1)*\A}
    \draw[line width=1.0pt] ({\j+cos(\m)},{sin(\m)}) -- ({\j+cos(\mm)},{sin(\mm)});
  % spokes
\foreach \i in {1,...,\a}
    \def \m {180+\i*\A}
    \draw[line width=1.0pt] ({\j+cos(\m)},{sin(\m)}) -- (\j,0);
% third fan
\def \j {3}
\def \a {2}
\def \A {180/(1+\a)}
  % vertices
\foreach \i in {1,...,\a}
    \def \m {180-\i*\A}
    \draw node [label=above: {$v_{\j,\i}$}] at ({\j+cos(\m)},{sin(\m)}) {};
  % tire
\foreach \i in {0,...,\a}
    \def \m {180-\i*\A}
    \def \mm {180-(\i+1)*\A}
    \draw[line width=1.0pt] ({\j+cos(\m)},{sin(\m)}) -- ({\j+cos(\mm)},{sin(\mm)});
  % spokes
\foreach \i in {1,...,\a}
    \def \m {180-\i*\A}
    \draw[line width=1.0pt] ({\j+cos(\m)},{sin(\m)}) -- (\j,0);
\end{tikzpicture}
$\qquad$
\begin{tikzpicture}[scale=1.4]
\tikzstyle{every node}=[draw,circle,fill=black,minimum size=1pt,inner sep=2pt]
% spine
\draw node (r) [label=left: {$r$}] at (0,0) {};
\draw node (x1) [label=225: {$x_1$}] at (1,0) {};
\draw node (x2) [label=above: {$x_2$}] at (2,0) {};
\draw node (x3) [label=315: {$x_3$}] at (3,0) {};
\draw node (s) [label=right: {$s$}] at (4,0) {};
\draw[line width=1.0pt] (r) -- (x1) -- (x2) -- (x3) -- (s);
% first fan
\def \j {1}
\def \a {3}
\def \A {180/(1+\a)}
  % vertices
\foreach \i in {1,...,\a}
    \def \m {180-\i*\A}
    \draw node [label=\m: {$v_{\j,\i}$}] at ({\j+cos(\m)},{sin(\m)}) {};
  % tire
\foreach \i in {0,...,0}
    \def \m {180-\i*\A}
    \def \mm {180-(\i+1)*\A}
    \draw[line width=1.0pt] ({\j+cos(\m)},{sin(\m)}) -- ({\j+cos(\mm)},{sin(\mm)});
  % spokes
\foreach \i in {2,...,\a}
    \def \m {180-\i*\A}
    \draw[line width=1.0pt] ({\j+cos(\m)},{sin(\m)}) -- (\j,0);
% second fan
\def \j {2}
\def \a {4}
\def \A {180/(1+\a)}
  % vertices
\foreach \i in {1,...,\a}
    \def \m {180+\i*\A}
    \draw node [label=\m: {$v_{\j,\i}$}] at ({\j+cos(\m)},{sin(\m)}) {};
  % tire
\foreach \i in {0,...,0}
    \def \m {180+\i*\A}
    \def \mm {180+(\i+1)*\A}
    \draw[line width=1.0pt] ({\j+cos(\m)},{sin(\m)}) -- ({\j+cos(\mm)},{sin(\mm)});
  % spokes
\foreach \i in {2,...,\a}
    \def \m {180+\i*\A}
    \draw[line width=1.0pt] ({\j+cos(\m)},{sin(\m)}) -- (\j,0);
% third fan
\def \j {3}
\def \a {2}
\def \A {180/(1+\a)}
  % vertices
\foreach \i in {1,...,\a}
    \def \m {180-\i*\A}
    \draw node [label=above: {$v_{\j,\i}$}] at ({\j+cos(\m)},{sin(\m)}) {};
  % tire
\foreach \i in {0,...,0}
    \def \m {180-\i*\A}
    \def \mm {180-(\i+1)*\A}
    \draw[line width=1.0pt] ({\j+cos(\m)},{sin(\m)}) -- ({\j+cos(\mm)},{sin(\mm)});
  % spokes
\foreach \i in {2,...,\a}
    \def \m {180-\i*\A}
    \draw[line width=1.0pt] ({\j+cos(\m)},{sin(\m)}) -- (\j,0);
\end{tikzpicture}
\caption{An overlapping fan graph of diameter 4 on the left. The BFS tree rooted at $r$ on the right.}
\label{fig:fan}
\end{center}
\end{figure}
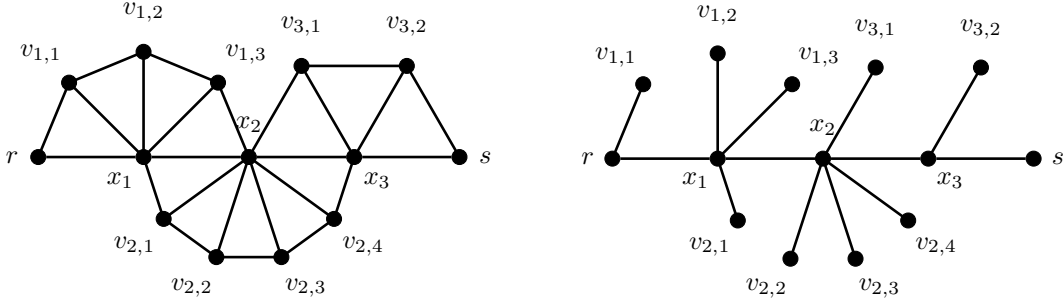

% ---------------------------
\subsection{Preliminary facts and lemmas}
We present some important preliminary facts and lemmas necessary for the proof of Theorem \ref{t:2PathTarget}.
These first two facts are used to determine the pebbling numbers of specially constructed trees which are essential to the proof.
Suppose $T$ is a tree with a root $r$. 
%We can view $T$ as a directed tree denoted by $T_r^*$ with edges directed toward $r$. 
A {\it path partition} $\cP$ of $T$ is a set of pairwise edge-disjoint directed paths whose union is $T$. 
It is an $r$-{\it path partition} if $r$ is an endpoint of the longest path of $\cP$. 
A path partition is said to {\it majorize} ($\succ$) another if its non-increasing sequence of the path sizes majorizes that of the other --- that is, $(a_1, a_2, \ldots, a_i) \succ (b_1, b_2,\ldots, b_t)$ if and only if $a_i > b_i$, where $i = \min\{j:a_j \not = b_j\}$.
An $r$-path partition of a tree $T$ is said to be {\it maximum} if it majorizes all other $r$-path partitions.

\begin{fct}
\label{f:tree}
\cite{Chung}
Let $T$ be a tree rooted at $r$, and let $\cP =\{P_1,...,P_k\}$ be a maximum $r$-path partition of $T$.
Denote by $a_i$ the length of the path $P_i$, and suppose that the paths are labeled so that $a_i \ge a_{i+1}$ for all $1\le i<k$.
(Note that $a_1 = \ecc(r)$.)
Then $\p_t(T,r) = t2^{a_1} + \sum_{i=2}^{k} 2^{a_i} - k + 1$ for all $t\ge 1$.
\end{fct}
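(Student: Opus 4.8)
\medskip
The plan is to prove $\pi_t(T,r)\ge N$ and $\pi_t(T,r)\le N$ separately, where $N=t2^{a_1}+\sum_{i=2}^k 2^{a_i}-k+1$. Throughout I would work with the greedy ``collapse toward $r$'' function: for a configuration $C$ and a vertex $v$ set $\Phi_v(C)=C(v)+\sum_{u}\lfloor\Phi_u(C)/2\rfloor$, the sum over the children $u$ of $v$ in $T$ rooted at $r$. The one genuinely standard input I would invoke is that on a \emph{tree} $\Phi_v(C)$ equals the largest number of pebbles that any sequence of pebbling steps can amass on $v$ using only the pebbles of the subtree $T_v$ below $v$ --- the ``$\le$'' direction being the usual acyclic/greedy-solution argument, since on a tree a pebble on a non-target vertex is useful only when moved rootward; in particular $C$ $t$-fold solves $r$ if and only if $\Phi_r(C)\ge t$. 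The case $|V(T)|=1$ is trivial ($N=t$), so we may assume $a_1\ge 1$.

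For the lower bound I would take the configuration $C_0$ with $t2^{a_1}-1$ pebbles on the far endpoint of $P_1$ and, for each $i\ge 2$, with $2^{a_i}-1$ pebbles on the far endpoint of $P_i$, so $|C_0|=N-1$. Computing $\Phi$ bottom-up, each stack of $2^{a_i}-1$ pebbles has collapsed to a single pebble by the time it reaches the near endpoint $w_i$ of $P_i$ and so contributes $0$ there, while the stack of $t2^{a_1}-1$ pebbles on $P_1$ collapses to $\lfloor(t2^{a_1}-1)/2^{a_1}\rfloor=t-1$ pebbles at $r$; hence $\Phi_r(C_0)=t-1<t$ and $C_0$ is $t$-fold unsolvable. (Equivalently and concretely: the interior of each $P_i$ is a pendant branch whose only exit is $w_i$, reachable only by first amassing $2^{a_i}$ pebbles on its leaf; with $2^{a_i}-1$ available no pebble ever leaves that branch, and pushing pebbles into it never helps, so only $P_1$'s pebbles reach $r$.)

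For the upper bound I would use strong induction on $|V(T)|$ and decompose $T$ at the root: write the children of $r$ as $u_1,\dots,u_m$, let $T_j$ be the subtree hanging from $u_j$, and let $e^{(j)}_1\ge e^{(j)}_2\ge\cdots$ be the lengths of a maximum $u_j$-path partition of $T_j$, so $a_1=1+\max_j e^{(j)}_1$. Since $C$ fails to $t$-fold solve $r$ exactly when $C(r)+\sum_j\lfloor\Phi_{u_j}(C|_{T_j})/2\rfloor\le t-1$, the maximum size of a $t$-fold-unsolvable configuration equals the maximum of $C(r)+\sum_j|C|_{T_j}$ over all ways to split an integer ``budget'' of $t-1$ among $C(r)$ and the $m$ arms, where spending $\beta$ on arm $j$ means requiring $\Phi_{u_j}(C|_{T_j})\le 2\beta+1$ and hence, by the inductive hypothesis for $T_j$, permits at most $\pi_{2\beta+2}(T_j,u_j)-1$ pebbles in $T_j$. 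The inductive formula makes $\pi_s(T_j,u_j)$ affine in $s$ with slope $2^{e^{(j)}_1}$, so one more unit of budget on arm $j$ buys $2^{\,e^{(j)}_1+1}$ extra pebbles; as $a_1-1=\max_j e^{(j)}_1$, the optimum dumps the entire budget on a deepest arm $u_{j^\ast}$, giving maximum unsolvable size $\bigl(\pi_{2t}(T_{j^\ast},u_{j^\ast})-1\bigr)+\sum_{j\ne j^\ast}\bigl(\pi_2(T_j,u_j)-1\bigr)$, whence $\pi_t(T,r)$ is this quantity plus $1$.

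What remains --- and what I expect to be the main, if largely bookkeeping, obstacle --- is matching this recursive expression with the stated closed form. The point is that a maximum $r$-path partition of $T$ has the natural recursive structure (repeatedly peel off a longest rootward path from an attachment vertex): since each path of such a partition is directed away from $r$, no path passes through the root, so the partition is a disjoint union over the arms, obtained by taking a maximum $u_j$-partition of each $T_j$ and prepending the edge $ru_j$ to its longest path. Thus the lengths $a_1\ge a_2\ge\cdots\ge a_k$ are exactly $\{e^{(j)}_1+1:1\le j\le m\}\cup\{e^{(j)}_l:1\le j\le m,\ l\ge2\}$, and substituting the inductive formula for $\pi_{2t}(T_{j^\ast},u_{j^\ast})$ and each $\pi_2(T_j,u_j)$ collapses the expression above to $t2^{a_1}+\sum_{i=2}^k 2^{a_i}-k+1$, with the various $\pm1$'s telescoping to $-k+1$. (As a shortcut, once $t=1$ is settled one may instead deduce $t\ge 2$ from Lemma~\ref{l:CheapLemma} with $H=T$, legitimate since trees are greedy: a configuration of size $\ge\pi_1(T,r)$ has an $r$-solution of cost at most $2^{\ecc(r)}=2^{a_1}$, leaving $\ge\pi_{t-1}(T,r)$ pebbles, so induction on $t$ finishes.) The only places needing genuine care are the greedy-optimality fact above and this final accounting; the rest is a routine computation.
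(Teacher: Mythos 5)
Fact~\ref{f:tree} is quoted from \cite{Chung} and the paper gives no proof of it, so there is nothing internal to compare against; your argument is essentially the standard proof of Chung's tree theorem and I find it sound. The lower-bound configuration (a stack of $t2^{a_1}-1$ at the far end of $P_1$ and $2^{a_i}-1$ at the far end of each other $P_i$) and the upper bound via the rootward collapse function $\Phi$ together with the decomposition at the root are exactly the classical route, and I checked that your budget-allocation optimization and the final telescoping of the $\pm1$'s do recover $t2^{a_1}+\sum_{i=2}^k 2^{a_i}-k+1$. Two points that you correctly identify as the load-bearing inputs deserve to be written out rather than asserted: (i) the fact that $C$ $t$-fold solves $r$ iff $\Phi_r(C)\ge t$ --- the clean way is to check that $\Phi_r$ is invariant under rootward moves and strictly decreases under moves away from $r$, so that $C^*(r)\le\Phi_r(C^*)\le\Phi_r(C)$ for any reachable $C^*$, with the greedy bottom-up strategy attaining the bound; and (ii) the structural claim that a maximum $r$-path partition of $T$ splits over the arms $T_j$ as a maximum $u_j$-partition with $ru_j$ prepended to its longest path (needed both to know that all maximum partitions give the same length multiset and to make the final accounting match). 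Both are standard lemmas, and your suggested shortcut for $t\ge 2$ via the Cheap Lemma with $H=T$ is also legitimate once $t=1$ is established.
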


Let $T$ be any Breadth First Search (BFS) spanning tree of a 2-path $G$, rooted at $r$.
Then $T$ preserves all distances from $r$;
that is, $\dist_T(v,r)=\dist_G(v,r)$ for all vertices $v$.
For a simplicial vertex $r$, we define $T_r$ (the {\it spinal} tree of $r$) to be such a BFS tree chosen in a specific manner, namely, with the priority of choosing vertices of the spine $P_1$ (from the maximum $r$-path partition) before other vertices whenever possible.
As we see from Figure \ref{fig:fan}, this produces a caterpillar, which has a very simple pebbling number formula, according to Fact \ref{f:tree}, because its path partition has only one long path.
Let $S$ be the set of two simplicial vertices of $G$.

\begin{figure}
\begin{center}
% FIRST PICTURE
\begin{tikzpicture}[scale=1.0]
\tikzstyle{every node}=[draw,circle,fill=black,inner sep=2pt]
% spine
\draw node (r) at (0,0) {};
\draw node (x1) [label=270: {$x_{i-1}$}] at (1,0) {};
\draw node (x2) [label=225: {$x_i$}] at (2,0) {};
\draw node (x3) at (3,0) {};
\draw node (s) at (4,0) {};
\draw[line width=1.0pt] (-.75,0) -- (r) -- (x1) -- (x2) -- (x3) -- (s) -- (4.75,0);
\draw node[inner sep=.7pt] at (-1.0,0) {};
\draw node[inner sep=.7pt] at (-1.2,0) {};
\draw node[inner sep=.7pt] at (-1.4,0) {};
\draw node[inner sep=.7pt] at (5.0,0) {};
\draw node[inner sep=.7pt] at (5.2,0) {};
\draw node[inner sep=.7pt] at (5.4,0) {};
% first fan
\def \j {1}
\def \A {180/4}
\def \B {11*180/16}
  % vertices
    \draw node (u) at ({1+cos(\B)},{sin(\A)}) {};
  % tire
    \draw[line width=1.0pt] (r) -- (u) -- ({2+cos(180-\A)},{sin(180-\A)});
  % spokes
    \draw[line width=1.0pt] (x1) -- (u);
% second fan
\def \j {2}
\def \a {3}
\def \A {180/(1+\a)}
  % vertices
    \draw node (v1) at ({\j+cos(180-1*\A)},{sin(180-1*\A)}) {};
    \draw node (v2) at ({\j+cos(180-2*\A)},{sin(180-2*\A)}) {};
    \draw node (v3) at ({\j+cos(180-3*\A)},{sin(180-3*\A)}) {};
  % tire
    \draw[line width=1.0pt] (x1) -- (v1) -- (v2) -- (v3) -- (x3);
  % spokes
    \draw[line width=1.0pt] ({\j+cos(180-1*\A)},{sin(180-1*\A)}) -- (\j,0);
    \draw[line width=1.0pt] ({\j+cos(180-2*\A)},{sin(180-2*\A)}) -- (\j,0);
    \draw[line width=1.0pt] ({\j+cos(180-3*\A)},{sin(180-3*\A)}) -- (\j,0);
% third fan
\def \j {3}
\def \a {3}
\def \A {180/(1+\a)}
  % vertices
    \draw node (w1) at ({\j+cos(180+1*\A)},{sin(180+1*\A)}) {};
    \draw node (w2) at ({\j+cos(180+2*\A)},{sin(180+2*\A)}) {};
    \draw node (w3) at ({\j+cos(180+3*\A)},{sin(180+3*\A)}) {};
  % tire
    \draw[line width=1.0pt] (x2) -- (w1) -- (w2) -- (w3) -- (s);
  % spokes
    \draw[line width=1.0pt] ({\j+cos(180+1*\A)},{sin(180+1*\A)}) -- (\j,0);
    \draw[line width=1.0pt] ({\j+cos(180+2*\A)},{sin(180+2*\A)}) -- (\j,0);
    \draw[line width=1.0pt] ({\j+cos(180+3*\A)},{sin(180+3*\A)}) -- (\j,0);
\end{tikzpicture}
$\qquad$
% SECOND PICTURE
\begin{tikzpicture}[scale=1.0]
\tikzstyle{every node}=[draw,circle,fill=black,inner sep=2pt]
% spine
\draw node (r) at (0,0) {};
\draw node (x1) [label=270: {$x_{i-1}$}] at (1,0) {};
\draw node (x2) [label=225: {$r$}] at (2,0) {};
\draw node (x3) at (3,0) {};
\draw node (s) at (4,0) {};
\draw[line width=1.0pt] (-.75,0) -- (r) -- (x1) -- (x2) -- (x3) -- (s) -- (4.75,0);
\draw node[inner sep=.7pt] at (-1.0,0) {};
\draw node[inner sep=.7pt] at (-1.2,0) {};
\draw node[inner sep=.7pt] at (-1.4,0) {};
\draw node[inner sep=.7pt] at (5.0,0) {};
\draw node[inner sep=.7pt] at (5.2,0) {};
\draw node[inner sep=.7pt] at (5.4,0) {};
% first fan
\def \j {1}
\def \A {180/4}
\def \B {11*180/16}
  % vertices
    \draw node (u) at ({1+cos(\B)},{sin(\A)}) {};
  % tire
  % spokes
    \draw[line width=1.0pt] (x1) -- (u);
% second fan
\def \j {2}
\def \a {3}
\def \A {180/(1+\a)}
  % vertices
    \draw node (v1) at ({\j+cos(180-1*\A)},{sin(180-1*\A)}) {};
    \draw node (v2) at ({\j+cos(180-2*\A)},{sin(180-2*\A)}) {};
    \draw node (v3) at ({\j+cos(180-3*\A)},{sin(180-3*\A)}) {};
  % tire
  % spokes
    \draw[line width=1.0pt] ({\j+cos(180-1*\A)},{sin(180-1*\A)}) -- (\j,0);
    \draw[line width=1.0pt] ({\j+cos(180-2*\A)},{sin(180-2*\A)}) -- (\j,0);
    \draw[line width=1.0pt] ({\j+cos(180-3*\A)},{sin(180-3*\A)}) -- (\j,0);
% third fan
\def \j {3}
\def \a {3}
\def \A {180/(1+\a)}
  % vertices
    \draw node (w1) at ({\j+cos(180+1*\A)},{sin(180+1*\A)}) {};
    \draw node (w2) at ({\j+cos(180+2*\A)},{sin(180+2*\A)}) {};
    \draw node (w3) at ({\j+cos(180+3*\A)},{sin(180+3*\A)}) {};
  % tire
    \draw[line width=1.0pt] (x2) -- (w1);
  % spokes
    \draw[line width=1.0pt] ({\j+cos(180+2*\A)},{sin(180+2*\A)}) -- (\j,0);
    \draw[line width=1.0pt] ({\j+cos(180+3*\A)},{sin(180+3*\A)}) -- (\j,0);
\end{tikzpicture}\\
\bigskip
% THIRD PICTURE
\begin{tikzpicture}[scale=1.0]
\tikzstyle{every node}=[draw,circle,fill=black,inner sep=2pt]
% spine
\draw node (r) at (0,0) {};
\draw node (x1) [label=270: {$x_{i-1}$}] at (1,0) {};
\draw node (x2) [label=225: {$x_i$}] at (2,0) {};
\draw node (x3) at (3,0) {};
\draw node (s) at (4,0) {};
\draw[line width=1.0pt] (-.75,0) -- (r) -- (x1);
\draw[line width=1.0pt] (x2) -- (x3) -- (s) -- (4.75,0);
\draw node[inner sep=.7pt] at (-1.0,0) {};
\draw node[inner sep=.7pt] at (-1.2,0) {};
\draw node[inner sep=.7pt] at (-1.4,0) {};
\draw node[inner sep=.7pt] at (5.0,0) {};
\draw node[inner sep=.7pt] at (5.2,0) {};
\draw node[inner sep=.7pt] at (5.4,0) {};
% first fan
\def \j {1}
\def \A {180/4}
\def \B {11*180/16}
  % vertices
    \draw node (u) at ({1+cos(\B)},{sin(\A)}) {};
  % tire
    \draw[line width=1.0pt] (u) -- ({2+cos(180-\A)},{sin(180-\A)});
  % spokes
% second fan
\def \j {2}
\def \a {3}
\def \A {180/(1+\a)}
  % vertices
    \draw node (v1) [label=above: {$r$}] at ({\j+cos(180-1*\A)},{sin(180-1*\A)}) {};
    \draw node (v2) at ({\j+cos(180-2*\A)},{sin(180-2*\A)}) {};
    \draw node (v3) at ({\j+cos(180-3*\A)},{sin(180-3*\A)}) {};
  % tire
    \draw[line width=1.0pt] (x1) -- (v1) -- (v2);
  % spokes
    \draw[line width=1.0pt] ({\j+cos(180-1*\A)},{sin(180-1*\A)}) -- (\j,0);
    \draw[line width=1.0pt] ({\j+cos(180-3*\A)},{sin(180-3*\A)}) -- (\j,0);
% third fan
\def \j {3}
\def \a {3}
\def \A {180/(1+\a)}
  % vertices
    \draw node (w1) at ({\j+cos(180+1*\A)},{sin(180+1*\A)}) {};
    \draw node (w2) at ({\j+cos(180+2*\A)},{sin(180+2*\A)}) {};
    \draw node (w3) at ({\j+cos(180+3*\A)},{sin(180+3*\A)}) {};
  % tire
    \draw[line width=1.0pt] (x2) -- (w1);
  % spokes
    \draw[line width=1.0pt] ({\j+cos(180+2*\A)},{sin(180+2*\A)}) -- (\j,0);
    \draw[line width=1.0pt] ({\j+cos(180+3*\A)},{sin(180+3*\A)}) -- (\j,0);
\end{tikzpicture}
$\qquad$
% FOURTH PICTURE
\begin{tikzpicture}[scale=1.0]
\tikzstyle{every node}=[draw,circle,fill=black,inner sep=2pt]
% spine
\draw node (r) at (0,0) {};
\draw node (x1) [label=270: {$x_{i-1}$}] at (1,0) {};
\draw node (x2) [label=225: {$x_i$}] at (2,0) {};
\draw node (x3) at (3,0) {};
\draw node (s) at (4,0) {};
\draw[line width=1.0pt] (-.75,0) -- (r) -- (x1) -- (x2);
\draw[line width=1.0pt] (x3) -- (s) -- (4.75,0);
\draw node[inner sep=.7pt] at (-1.0,0) {};
\draw node[inner sep=.7pt] at (-1.2,0) {};
\draw node[inner sep=.7pt] at (-1.4,0) {};
\draw node[inner sep=.7pt] at (5.0,0) {};
\draw node[inner sep=.7pt] at (5.2,0) {};
\draw node[inner sep=.7pt] at (5.4,0) {};
% first fan
\def \j {1}
\def \A {180/4}
\def \B {11*180/16}
  % vertices
    \draw node (u) at ({1+cos(\B)},{sin(\A)}) {};
  % tire
  % spokes
    \draw[line width=1.0pt] (x1) -- (u);
% second fan
\def \j {2}
\def \a {3}
\def \A {180/(1+\a)}
  % vertices
    \draw node (v1) at ({\j+cos(180-1*\A)},{sin(180-1*\A)}) {};
    \draw node (v2) at ({\j+cos(180-2*\A)},{sin(180-2*\A)}) {};
    \draw node (v3) [label=right: {$r$}] at ({\j+cos(180-3*\A)},{sin(180-3*\A)}) {};
  % tire
    \draw[line width=1.0pt] (v2) -- (v3) -- (x3);
  % spokes
    \draw[line width=1.0pt] ({\j+cos(180-1*\A)},{sin(180-1*\A)}) -- (\j,0);
    \draw[line width=1.0pt] ({\j+cos(180-3*\A)},{sin(180-3*\A)}) -- (\j,0);
% third fan
\def \j {3}
\def \a {3}
\def \A {180/(1+\a)}
  % vertices
    \draw node (w1) at ({\j+cos(180+1*\A)},{sin(180+1*\A)}) {};
    \draw node (w2) at ({\j+cos(180+2*\A)},{sin(180+2*\A)}) {};
    \draw node (w3) at ({\j+cos(180+3*\A)},{sin(180+3*\A)}) {};
  % tire
  % spokes
    \draw[line width=1.0pt] ({\j+cos(180+1*\A)},{sin(180+1*\A)}) -- (\j,0);
    \draw[line width=1.0pt] ({\j+cos(180+2*\A)},{sin(180+2*\A)}) -- (\j,0);
    \draw[line width=1.0pt] ({\j+cos(180+3*\A)},{sin(180+3*\A)}) -- (\j,0);
\end{tikzpicture}
\caption{A 2-path with rooted trees $T_r$ for the three cases of internal roots $r$.}
\label{fig:examples}
\end{center}
\end{figure}
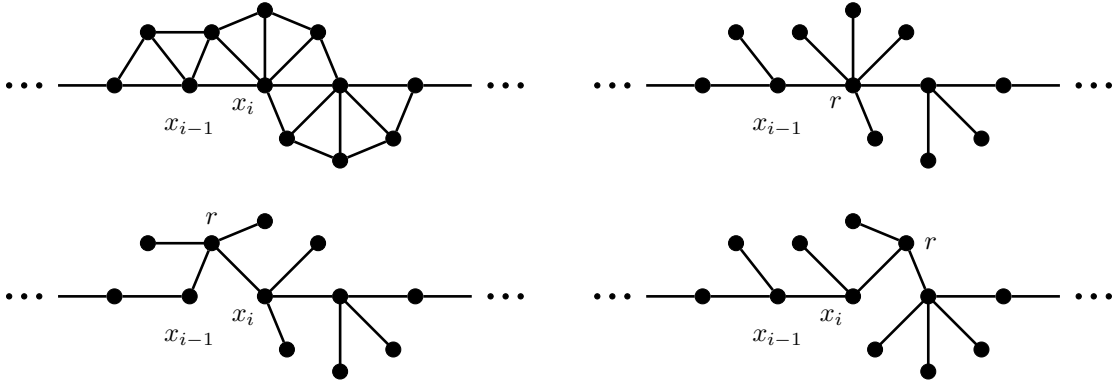

For an internal, spinal root $r$, let $F_i$ be the fan centered on $r$, and let $A_r$ be the set of vertices of $F_i$ that are in no other fan.
For each $v\in A_r$ we have that the edge $rv$ is a 2-path, which we denote by $G_r^v$.
Then $G-A_r$ is the union of two 2-paths, which we denote by $G_r^s$ for $s\in S$, that share only the vertex $r$.
Note that, for each $u\in S\cup A_r$, we have that $r$ and $u$ are the simplicial vertices of $G_r^u$; we denote a spinal tree of $r$ in $G_r^u$ by $T_r(G_r^u)$.
Now define $T_r=\cup_{u\in S\cup A_r}T_r(G_r^u)$.

For an internal, non-spinal root $r$, with $r$ in two fans, let $F_i$ and $F_{i+1}$ be the two fans that share $r$, centered on $x_i$ and $x_{i+1}$ respectively.
Then $G-x_ix_{i+1}$ is the union of two 2-paths $G_r^s$, for $s\in S$, that share only the vertex $r$.
Note that, for each $s\in S$, we have that $r$ and $s$ are the simplicial vertices of $G_r^s$.
We denote a spinal tree of $r$ in $G_r^s$ by $T_r(G_r^s)$, and so define $T_r=\cup_{s\in S}T_r(G_r^s)$.

For an internal, non-spinal root $r$, with $r$ unique to one fan, let $F_i$ be the fan, centered on $x_i$, that contains $r$.
Then $G$ is the union of two 2-paths $G_r^s$, for $s\in S$, that share the edge $rx_i$.
Note that, for each $s\in S$, we have that $r$ and $s$ are the simplicial vertices of $G_r^s$.
We denote a spinal tree of $r$ in $G_r^s$ by $T_r(G_r^s)$, and so define $T_r=\cup_{s\in S}T_r(G_r^s)$.

Observe that, in the first two cases, $T_r$ is an edge-disjoint union of $\deg(r)-2$ caterpillars, all sharing only the vertex $r$.
In the third case, however, $T_r$ is a union of two caterpillars, each containing the edge $rx_i$.
See Figure \ref{fig:examples} for some examples.
The next result follows from Fact \ref{f:tree}.

\begin{cor}
\label{c:spinal}
Let $G$ be a 2-path of diameter $d$ on $n$ vertices, and $T_r$ be a spinal tree rooted at vertex $r$.
Then $\p(T_r,r) =$
\begin{enumerate}
    \item 
    $2^{\ecc(r)} + 2^{d-\ecc(r)} + n - d - 2$ if $r$ is spinal for some representation, and
    \item 
    $2^{\ecc(r)} + 2^{d+1-\ecc(r)} + n - d - 3$ if $r$ is non-spinal in every representation.
\end{enumerate}
In particular, $\p(T_r,r)\le 2^d+n-d-1$ for all $r$.
\end{cor}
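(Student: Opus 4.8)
The plan is to derive both formulas from Fact~\ref{f:tree} with $t=1$. Writing $(a_1\ge a_2\ge\cdots\ge a_k)$ for the length sequence of a maximum $r$-path partition of $T_r$ (a partition of the edges into paths directed toward $r$, with $r$ an endpoint of a longest one and $a_1=\ecc(r)$), Fact~\ref{f:tree} gives
\[
\pi(T_r,r)=\pi_1(T_r,r)=2^{a_1}+\sum_{i=2}^{k}2^{a_i}-k+1 .
\]
So the whole argument reduces to identifying this partition for the trees $T_r$ built in the paragraphs preceding the corollary.

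The central claim is that a maximum $r$-path partition of $T_r$ has the shape $(\ecc(r),\delta,1,1,\dots,1)$, where $\delta=d-\ecc(r)$ if $r$ is spinal in some representation (and $\delta=0$, i.e.\ there is a single long path, when $r$ is a simplicial vertex) and $\delta=d+1-\ecc(r)$ if $r$ is non-spinal in every representation. The idea: in every case $T_r$ is an edge-disjoint union of at most two caterpillars, and, orienting $T_r$ toward $r$, their spines form two monotone paths---an ``$\ecc(r)$-arm'' toward the farther simplicial vertex and a ``$\delta$-arm'' toward the other---which together cover exactly the ``spine'' edges of $T_r$; here $\ecc(r)+\delta$ equals $d$ when $r$ lies on a shortest path between the two simplicial vertices and equals $d+1$ otherwise (the extra unit being the cost of the detour through the off-spine vertex $r$, or, in the case where the two caterpillars overlap in the edge $rx_i$, the effect of that edge being used only once). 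Every other edge of $T_r$ is a leg hanging off a spine vertex; because the caterpillars come from BFS trees built with priority on the spine, each such leg-vertex is a leaf, so each leg is a monotone path of length $1$, extendable neither backward (it originates at a leaf) nor forward (its only continuation toward $r$ is a spine edge already used by one of the two arms). This is the step I expect to require the most care: one must run through each of the cases and subcases set up before the corollary, confirming that no leg can be lengthened, that the two arm lengths are precisely $\ecc(r)$ and $\delta$, and that the overlapping edge $rx_i$ in the third case is accounted for correctly.

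With the partition in hand, the rest is bookkeeping. The tree $T_r$ has $n-1$ edges, $\ecc(r)+\delta$ of them on the two arms, so it has $n-1-\ecc(r)-\delta$ legs and $k=n+1-\ecc(r)-\delta$ (one less in the simplicial subcase, a discrepancy that cancels in the final count). Substituting $\sum_{i=2}^{k}2^{a_i}=2^{\delta}+2\bigl(n-1-\ecc(r)-\delta\bigr)$ into the displayed formula and simplifying yields
\[
\pi(T_r,r)=2^{\ecc(r)}+2^{d-\ecc(r)}+n-d-2
\quad\text{or}\quad
\pi(T_r,r)=2^{\ecc(r)}+2^{d+1-\ecc(r)}+n-d-3
\]
in the two respective cases (the simplicial subcase, with $\ecc(r)=d$, returns $2^{d}+1+n-d-2=2^{d}+n-d-1$, in agreement with the first formula). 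For the uniform bound I would use that $x\mapsto 2^{x}+2^{m-x}$ is convex on $[0,m]$ and so attains its maximum at an endpoint: with $m=d$ and $0\le\ecc(r)\le d$ this gives $2^{\ecc(r)}+2^{d-\ecc(r)}\le 2^{d}+1$, and with $m=d+1$ and $1\le\ecc(r)\le d$ (a non-simplicial vertex of a diameter-$d$ graph has eccentricity between $1$ and $d$) it gives $2^{\ecc(r)}+2^{d+1-\ecc(r)}\le 2^{d}+2$; in either case $\pi(T_r,r)\le 2^{d}+n-d-1$, which finishes the proof.
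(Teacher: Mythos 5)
Your proposal is correct and follows essentially the same route as the paper: both identify the maximum $r$-path partition of $T_r$ as two long arms of lengths $\ecc(r)$ and $\delta$ (with $\ecc(r)+\delta$ equal to $d$ or $d+1$ according to the case) plus pendant single-edge paths, and then apply Fact~\ref{f:tree}. The only cosmetic difference is that you obtain the uniform bound via convexity of $2^{x}+2^{m-x}$, whereas the paper simply maximizes over $\ecc(r)$ directly; the arithmetic agrees in every case.
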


\begin{proof}
Write $S=\{s_1,s_2\}$.
If $r$ is a spinal root then, for each $T_r(G_r^v)$,  $v\in S\cup A_r$, $r$ is a simplicial vertex.
Label the trees so that $T_r(G_r^{s_1})$ has diameter $\ecc(r)$ and $T_r(G_r^{s_2})$ has diameter $d-\ecc(r)$.
The remaining trees are single edges.
Then, by Fact \ref{f:tree}, we have
\begin{align*}
\p(T_r,r)    
&= (2^{\ecc(r)}-1) + (2^{d-\ecc(r)}-1) +(n-d-1)(2^1-1)+1\\
&= 2^{\ecc(r)}+2^{d-\ecc(r)}+n-d-2.
\end{align*}

If $r$ is non-spinal then $r$ is simplicial in each $T_r(G_r^{s_i})$.
Observe that if $r$ is non-spinal in every representation then it is never in three fans; otherwise, if $x_i$ is the center of the middle fan then we obtain a spinal representation for $r$ by using the path $x_{i-1}rx_{i+1}$ in place of $x_{i-1}x_ix_{i+1}$ in the spine.

Suppose it is the case that the edge $rx_i$ is not in the spine of $T_r(G_r^{s_j})$ for some $j$.
We remark in this case that $\diam(T_r) = d + 1$, since we always gain one more edge in the spine of $T_r$ than in the spine of $G$ only.
Label the trees so that $T_r(G_r^{s_1})$ has diameter $\ecc(r)$ and $T_r(G_r^{s_2})$ has diameter $d+1-\ecc(r)$.
Because in each case (see also Figure \ref{fig:othercase}) we have $n-d-2$ vertices not on either spine, Fact \ref{f:tree} implies that
\begin{align*}
\p(T_r,r)
&= (2^{\ecc(r)} -1) +(2^{d+1-\ecc(r)}-1) + (n-d-2)(2^1-1) +1\\
&= 2^{\ecc(r)}+2^{d+1-\ecc(r)}+n-d-3.
\end{align*}

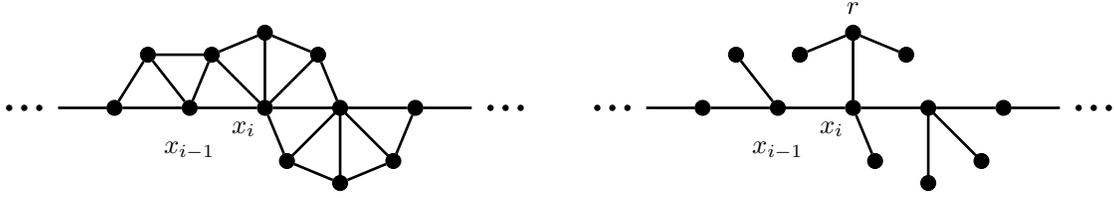
\begin{figure}
\begin{center}
% FIRST PICTURE
\begin{tikzpicture}[scale=1.0]
\tikzstyle{every node}=[draw,circle,fill=black,inner sep=2pt]
% spine
\draw node (r) at (0,0) {};
\draw node (x1) [label=270: {$x_{i-1}$}] at (1,0) {};
\draw node (x2) [label=225: {$x_i$}] at (2,0) {};
\draw node (x3) at (3,0) {};
\draw node (s) at (4,0) {};
\draw[line width=1.0pt] (-.75,0) -- (r) -- (x1) -- (x2) -- (x3) -- (s) -- (4.75,0);
\draw node[inner sep=.7pt] at (-1.0,0) {};
\draw node[inner sep=.7pt] at (-1.2,0) {};
\draw node[inner sep=.7pt] at (-1.4,0) {};
\draw node[inner sep=.7pt] at (5.0,0) {};
\draw node[inner sep=.7pt] at (5.2,0) {};
\draw node[inner sep=.7pt] at (5.4,0) {};
% first fan
\def \j {1}
\def \A {180/4}
\def \B {11*180/16}
  % vertices
    \draw node (u) at ({1+cos(\B)},{sin(\A)}) {};
  % tire
    \draw[line width=1.0pt] (r) -- (u) -- ({2+cos(180-\A)},{sin(180-\A)});
  % spokes
    \draw[line width=1.0pt] (x1) -- (u);
% second fan
\def \j {2}
\def \a {3}
\def \A {180/(1+\a)}
  % vertices
    \draw node (v1) at ({\j+cos(180-1*\A)},{sin(180-1*\A)}) {};
    \draw node (v2) at ({\j+cos(180-2*\A)},{sin(180-2*\A)}) {};
    \draw node (v3) at ({\j+cos(180-3*\A)},{sin(180-3*\A)}) {};
  % tire
    \draw[line width=1.0pt] (x1) -- (v1) -- (v2) -- (v3) -- (x3);
  % spokes
    \draw[line width=1.0pt] ({\j+cos(180-1*\A)},{sin(180-1*\A)}) -- (\j,0);
    \draw[line width=1.0pt] ({\j+cos(180-2*\A)},{sin(180-2*\A)}) -- (\j,0);
    \draw[line width=1.0pt] ({\j+cos(180-3*\A)},{sin(180-3*\A)}) -- (\j,0);
% third fan
\def \j {3}
\def \a {3}
\def \A {180/(1+\a)}
  % vertices
    \draw node (w1) at ({\j+cos(180+1*\A)},{sin(180+1*\A)}) {};
    \draw node (w2) at ({\j+cos(180+2*\A)},{sin(180+2*\A)}) {};
    \draw node (w3) at ({\j+cos(180+3*\A)},{sin(180+3*\A)}) {};
  % tire
    \draw[line width=1.0pt] (x2) -- (w1) -- (w2) -- (w3) -- (s);
  % spokes
    \draw[line width=1.0pt] ({\j+cos(180+1*\A)},{sin(180+1*\A)}) -- (\j,0);
    \draw[line width=1.0pt] ({\j+cos(180+2*\A)},{sin(180+2*\A)}) -- (\j,0);
    \draw[line width=1.0pt] ({\j+cos(180+3*\A)},{sin(180+3*\A)}) -- (\j,0);
\end{tikzpicture}
$\qquad$
% SECOND PICTURE
\begin{tikzpicture}[scale=1.0]
\tikzstyle{every node}=[draw,circle,fill=black,inner sep=2pt]
% spine
\draw node (r) at (0,0) {};
\draw node (x1) [label=270: {$x_{i-1}$}] at (1,0) {};
\draw node (x2) [label=225: {$x_i$}] at (2,0) {};
\draw node (x3) at (3,0) {};
\draw node (s) at (4,0) {};
\draw[line width=1.0pt] (-.75,0) -- (r) -- (x1) -- (x2) -- (x3) -- (s) -- (4.75,0);
\draw node[inner sep=.7pt] at (-1.0,0) {};
\draw node[inner sep=.7pt] at (-1.2,0) {};
\draw node[inner sep=.7pt] at (-1.4,0) {};
\draw node[inner sep=.7pt] at (5.0,0) {};
\draw node[inner sep=.7pt] at (5.2,0) {};
\draw node[inner sep=.7pt] at (5.4,0) {};
% first fan
\def \j {1}
\def \A {180/4}
\def \B {11*180/16}
  % vertices
    \draw node (u) at ({1+cos(\B)},{sin(\A)}) {};
  % tire
  % spokes
    \draw[line width=1.0pt] (x1) -- (u);
% second fan
\def \j {2}
\def \a {3}
\def \A {180/(1+\a)}
  % vertices
    \draw node (v1) at ({\j+cos(180-1*\A)},{sin(180-1*\A)}) {};
    \draw node (v2) [label=above: {$r$}] at ({\j+cos(180-2*\A)},{sin(180-2*\A)}) {};
    \draw node (v3) at ({\j+cos(180-3*\A)},{sin(180-3*\A)}) {};
  % tire
    \draw[line width=1.0pt] (v1) -- (v2) -- (v3);
  % spokes
    \draw[line width=1.0pt] ({\j+cos(180-2*\A)},{sin(180-2*\A)}) -- (\j,0);
% third fan
\def \j {3}
\def \a {3}
\def \A {180/(1+\a)}
  % vertices
    \draw node (w1) at ({\j+cos(180+1*\A)},{sin(180+1*\A)}) {};
    \draw node (w2) at ({\j+cos(180+2*\A)},{sin(180+2*\A)}) {};
    \draw node (w3) at ({\j+cos(180+3*\A)},{sin(180+3*\A)}) {};
  % tire
    \draw[line width=1.0pt] (x2) -- (w1);
  % spokes
    \draw[line width=1.0pt] ({\j+cos(180+2*\A)},{sin(180+2*\A)}) -- (\j,0);
    \draw[line width=1.0pt] ({\j+cos(180+3*\A)},{sin(180+3*\A)}) -- (\j,0);
\end{tikzpicture}
\caption{A 2-path with $rx_i$ in the spines of both trees $T_r(G_r^{s_1})$ and $T_r(G_r^{s_2})$.}
\label{fig:othercase}
\end{center}
\end{figure}

All these formulas are maximized when $\ecc(r)$ is as large as possible.
Hence we have
\begin{align*}
    \max\{2^{d-1}+2+n-d-2,2^d+2+n-d-3\}
    &= \max\{2^d+n-d-2^{d-1},2^d+n-d-1\}\\
    &= 2^d+n-d-1,
\end{align*}
as claimed.
\end{proof}

\begin{lem}
\label{l:SimpTree}
If $r^*$ is a simplicial target of a 2-path $G$, with rooted spinal tree $T^* = T_{r^*}$, then $\p(T_r,r)\le\p(T^*,r^*)$.
\end{lem}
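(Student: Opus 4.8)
The plan is to deduce the inequality immediately from the explicit formulas of Corollary \ref{c:spinal}, so that essentially no new argument is needed. The first step is to observe that a simplicial vertex of a $2$-path must be an endpoint of the spine: the spine $P=(x_0,\ldots,x_d)$ joins the two simplicial vertices of $G$, and since $d=\dist(x_0,x_d)=\diam(G)$ we get $\ecc(x_0)=\ecc(x_d)=d$. Hence $\ecc(r^*)=d$ for the simplicial target $r^*$, a value that does not depend on which of the two simplicial vertices $r^*$ happens to be.

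The second step is to pin down $\p(T^*,r^*)$. Since $r^*$ is simplicial, $T^*=T_{r^*}$ is a caterpillar whose central path is the spine, of length $d=\ecc(r^*)$; hence its maximum $r^*$-path partition consists of that spine together with the $n-d-1$ remaining edges, each a path of length $1$. Substituting $a_1=d$, $a_2=\cdots=a_k=1$, $k=n-d$, and $t=1$ into Fact \ref{f:tree} gives $\p(T^*,r^*)=2^d+2(n-d-1)-(n-d)+1=2^d+n-d-1$; equivalently, this is case (1) of Corollary \ref{c:spinal} evaluated at $\ecc(r^*)=d$. If one wants to avoid recomputing, it is enough to note that the configuration placing $2^d-1$ pebbles on the far simplicial vertex and one pebble on each non-spine vertex is unsolvable for $r^*$, which already gives the lower bound $\p(T^*,r^*)\ge 2^d+n-d-1$ that the argument actually uses.

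The last step is to apply the ``in particular'' clause of Corollary \ref{c:spinal}, namely $\p(T_r,r)\le 2^d+n-d-1$ for every vertex $r$ of $G$. Combining this with the second step yields $\p(T_r,r)\le 2^d+n-d-1=\p(T^*,r^*)$, which is the claim. I do not expect a real obstacle; the only points needing a little care are the remark that $\p(T^*,r^*)$ is the same for either choice of simplicial vertex, and the degenerate cases $G\in\{K_2,K_3\}$, where $T_r$ and $T^*$ are forced and the inequality is trivial.
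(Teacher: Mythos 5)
Your proposal is correct and matches the paper's own argument: both deduce the bound $\p(T_r,r)\le 2^d+n-d-1$ from Corollary \ref{c:spinal} and identify this quantity with $\p(T^*,r^*)$ via Fact \ref{f:tree}. The extra remarks (the lower-bound configuration, the independence of the choice of simplicial vertex) are fine but not needed.
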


\begin{proof}
Corollary \ref{c:spinal} shows that the pebbling numbers for each type of $r$ in $G$ are bounded above by $2^d+n-d-1$, which equals the pebbling number at $r^*$ by Fact \ref{f:tree}.
\end{proof}

\begin{lem}
\label{l:Cheap}
Let $G$ be a 2-path and $r$ be any vertex.
Then $\p_2(G)\ge \p(T^*,r^*)$.
Thus, any configuration of size at least $\p_2(G)$ is $r$-cheap.
\end{lem}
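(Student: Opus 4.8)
The plan is to deduce both parts from formulas and lemmas already in hand. For the inequality $\p_2(G)\ge\p(T^*,r^*)$, I would first read off the two quantities explicitly: Theorem \ref{t:2Path} with $t=2$ gives $\p_2(G)=2^{d+1}+n-2d$, while, since the simplicial vertex $r^*$ is spinal with $\ecc(r^*)=\diam(G)=d$, Corollary \ref{c:spinal}(1) (equivalently Fact \ref{f:tree} applied to the caterpillar $T^*$, whose maximum $r^*$-path partition consists of one path of length $d$ together with $n-d-1$ single edges) gives $\p(T^*,r^*)=2^d+n-d-1$. The claimed inequality is then equivalent, after cancelling $n$, to $2^{d+1}-2d\ge 2^d-d-1$, i.e. to $2^d\ge d-1$, which holds for all $d\ge 1$.

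For the second assertion I would apply the Cheap Lemma (Lemma \ref{l:CheapLemma}) with $H=T_r$, the spinal tree of $G$ rooted at $r$. By construction $T_r$ is a distance-preserving BFS spanning tree, so $\dist_{T_r}(v,r)=\dist_G(v,r)$ for every vertex $v$; being a tree it is greedy, hence $r$-greedy as a subgraph of $G$. Thus $H=T_r$ meets the hypotheses of Lemma \ref{l:CheapLemma}, so every configuration of $G$ of size at least $\p(T_r,r)$ is $r$-cheap. Chaining $\p(T_r,r)\le\p(T^*,r^*)$ (Lemma \ref{l:SimpTree}) with $\p(T^*,r^*)\le\p_2(G)$ (the first paragraph) then shows that every configuration of size at least $\p_2(G)$ is $r$-cheap, which is the conclusion.

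I do not expect a serious obstacle; the argument is essentially bookkeeping with the explicit pebbling-number formulas plus one invocation of the Cheap Lemma. The only point deserving care is that the Cheap Lemma is used at an \emph{arbitrary} vertex $r$, not merely a simplicial one — which is precisely why $T_r$ was set up as a distance-preserving BFS tree for every vertex type and why Corollary \ref{c:spinal} and Lemma \ref{l:SimpTree} were proved uniformly over all $r$. Consequently no case analysis on the location of $r$ is needed here: the bound $\p(T_r,r)\le 2^d+n-d-1=\p(T^*,r^*)$ already subsumes every case.
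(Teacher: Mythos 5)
Your proof is correct and follows essentially the same route as the paper's: compare the explicit formulas $\p_2(G)=2^{d+1}+n-2d$ and $\p(T^*,r^*)=2^d+n-d-1$ (the inequality reducing to $2^d\ge d-1$), then invoke the Cheap Lemma with the spinal tree $T_r$. You are in fact slightly more explicit than the paper in spelling out the chain $\p(T_r,r)\le\p(T^*,r^*)\le\p_2(G)$ via Lemma \ref{l:SimpTree} and in checking the hypotheses of Lemma \ref{l:CheapLemma}, which the paper leaves implicit.
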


\begin{proof}
Note that for any $d$, $2^d > d$.
Then by Theorem \ref{t:2Path} we have:
\begin{align*}
\p(T^*,r^*)
&=2^d+n-d-1\\
&\le 2^d + n - d - 1 + (2^d - d + 1)\\
&= 2^{d+1} + n - 2d\\
&= \p_2(G).
\end{align*}
The existence of a cheap solution follows from the Cheap Lemma \ref{l:CheapLemma}.
\end{proof}

% ---------------------------
\subsection{Verification of the Target Conjecture}

\begin{thm}
\label{t:2PathTarget}
Let $G$ be a 2-path and $D$ be a target distribution of size $t$.
Then $\p_{t}(G) \ge \p(G,D)$.
\end{thm}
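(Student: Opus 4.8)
The plan is to induct on the size $t=|D|$, peeling off one demanded pebble at a time and paying for it with a cheap solution supplied by Lemma \ref{l:Cheap}.

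For the base case $t=1$ we have $\dD=\{r\}$ for a single vertex $r$, so $\p(G,D)=\p(G,r)\le\max_{v\in V}\p(G,v)=\p_1(G)$ directly from the definition of the $1$-fold pebbling number (and Theorem \ref{t:2Path} evaluates this common value to $2^d+n-2d$). For the inductive step, assume $t\ge 2$ and that the statement holds for all targets of size $t-1$. Let $C$ be an arbitrary configuration on $G$ with $|C|\ge\p_t(G)=t2^d+n-2d$ (Theorem \ref{t:2Path}); I must show $C$ solves $D$. Choose any vertex $r$ with $D(r)>0$. Since $t\ge 2$ we have $|C|\ge\p_t(G)\ge\p_2(G)$, so Lemma \ref{l:Cheap} furnishes a cheap $r$-solution $\s$, i.e.\ one with $\cost(\s)\le 2^{\ecc(r)}\le 2^d$. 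Removing from $C$ all pebbles used by $\s$ leaves a configuration $C-\s$ with
$$|C-\s| \;=\; |C|-\cost(\s) \;\ge\; \bigl(t2^d+n-2d\bigr)-2^d \;=\; (t-1)2^d+n-2d \;=\; \p_{t-1}(G),$$
the last equality again by Theorem \ref{t:2Path}. Since $D-r$ has size $t-1$, the inductive hypothesis yields that $C-\s$ solves $D-r$. As $\s$ is an $r$-solution using only pebbles of $C$, disjoint from those remaining in $C-\s$, appending a $(D-r)$-solution of $C-\s$ to $\s$ produces a $D$-solution of $C$ --- exactly the observation from the Preliminaries that if $C-\s$ solves $D-r$ then $C$ solves $D$. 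Hence every configuration of size at least $\p_t(G)$ is $D$-solvable, that is, $\p(G,D)\le\p_t(G)$.

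The conceptual content of this argument is entirely front-loaded into Lemma \ref{l:Cheap} (equivalently Corollary \ref{c:spinal}): 2-paths admit cheap solutions at every vertex once the configuration is reasonably large, and this is precisely what the 2-path structure buys us. The reason the induction closes so cleanly --- and, I expect, the only point requiring any care --- is the exact arithmetic coincidence $\p_t(G)-\p_{t-1}(G)=2^d$, which is exactly the worst-case cost of a cheap solution; if this gap were smaller the peeling step would fail. One must also check that the hypothesis $|C|\ge\p_2(G)$ of Lemma \ref{l:Cheap} holds in the inductive step, but this is automatic for $t\ge 2$, which is exactly why the induction is based at $t=1$, where the statement is a triviality needing none of the cheap-solution apparatus. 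No combinatorial fact about 2-paths beyond Corollary \ref{c:spinal} is needed in this theorem itself.
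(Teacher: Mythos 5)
Your proof is correct and follows essentially the same route as the paper's: induct on $t$, use Lemma \ref{l:Cheap} to extract a cheap $r$-solution $\s$ of cost at most $2^d$ for some $r\in\dD$, and invoke the identity $\p_t(G)-\p_{t-1}(G)=2^d$ from Theorem \ref{t:2Path} to close the induction. Your write-up is in fact slightly more careful than the paper's (which contains a small notational slip, writing $\p_t(G,D)$ and $\p_{t-1}(G,D-r)$ where $\p_t(G)$ and $\p_{t-1}(G)$ are meant), and your identification of the gap $\p_t(G)-\p_{t-1}(G)=2^d$ matching the worst-case cheap cost as the crux is exactly right.
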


\begin{proof}

Let $|C| = \p_{t}(G)$.
We proceed by induction.
If $t=1$ then $\p_{1}(G) \ge \p(G,r)$ for any $r$, so $C$ solves any $D$ of size 1.

Now let $t \ge 2$ and recall from Theorem \ref{t:2Path} that $\p_{t}(G) = \p_{t-1}(G) + 2^d$.
By the induction hypothesis, we assume that $\p(G,D') \le \p_{t-1}(G)$ for all $|D'| = t-1$.
Let $D$ be given.
Choose any $r\in\dD$ and let $T_r$ denote a spinal tree of $r$. 
Then, since $|C|\ge\p_2(G)\ge\p(T_r,r)$, we know by the Cheap Lemma \ref{l:Cheap} that $C$ has a cheap $r$-solution $\s$.
Thus $|C-\s| = |C| - \cost(\s) \ge \p_{t}(G,D) - 2^d = \p_{t-1}(G,D-r)$.
Therefore $C-\s$ solves $D-r$ by induction, and hence $C$ solves $D$.
\end{proof}

% ================================
% ================================
\section{Kneser graphs}
\label{s:Kneser}

% ---------------------------
\subsection{Definition}

For $m\ge h\ge 1$, the {\it Kneser graph} $K(m,h)$ is the graph whose vertex set equals the set all subsets of $[m]=\{1,2,\ldots,m\}$ of size $h$ --- so that $|V(K(m,h))| = \binom{m}{h}$ --- with two vertices adjacent if and only if they are disjoint.
Observe that $K(m,1)$ is the complete graph on $m$ vertices and that $K(5,2)$ is the Petersen graph.
Also note that $K(m,h)$ is empty for $m<2h$, is a matching for $m=2h$, is connected for $m\ge 2h+1$, has $\diam=2$ for $m\ge 3h-1$, and is vertex transitive.
The following known theorems are used in the proofs below.
Let $\k(G)$ denote the connectivity of a graph $G$.

\begin{thm}
\label{t:KneserReg}
\cite{CheLih}
For $m\ge 2h+1\ge 3$, $K(m,h)$ is connected, edge transitive, and regular of degree $\binom{m-h}{h}$.
\end{thm}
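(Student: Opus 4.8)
The plan is to establish the three assertions in turn, using throughout the natural action of the symmetric group $S_m$ on $[m]$, which induces an action on the $h$-subsets of $[m]$ (the vertices of $K(m,h)$) preserving the disjointness relation, and hence acting by graph automorphisms.

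First I would dispatch regularity, which is immediate: the neighbors of a vertex $A$ are precisely the $h$-subsets disjoint from $A$, that is, the $h$-subsets of the $(m-h)$-element set $[m]\setminus A$; since $m\ge 2h+1$ forces $m-h\ge h$, there are exactly $\binom{m-h}{h}$ of them, so every vertex has this degree. Next, for edge transitivity, I take two edges $\{A,B\}$ and $\{A',B'\}$, so that $A\cap B=\emptyset=A'\cap B'$ with all four sets of size $h$, observe that $|A\cup B|=2h=|A'\cup B'|$ (and hence also $|[m]\setminus(A\cup B)|=m-2h=|[m]\setminus(A'\cup B')|$), and build a permutation $\pi\in S_m$ carrying $A$ onto $A'$, $B$ onto $B'$, and the remaining $m-2h$ points onto the remaining $m-2h$ points; then $\pi$ sends the edge $\{A,B\}$ to $\{A',B'\}$. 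The same recipe, sending $A$ to any prescribed $A'$ and extending arbitrarily, yields the vertex transitivity already asserted in the text.

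The substantive point is connectivity, and here I would argue as follows. By vertex transitivity it is enough to join an arbitrary pair of vertices $A,B$ by a walk, and for this I reduce to the case $|A\cap B|=h-1$. Indeed, the ``Johnson graph'' on the $h$-subsets of $[m]$ --- two sets adjacent when they differ in exactly one element --- is connected, since one may pass from $A$ to $B$ by repeatedly exchanging an element of $A\setminus B$ for one of $B\setminus A$; so it suffices to show that two $h$-subsets differing in a single element lie in the same component of $K(m,h)$. Suppose then $|A\cap B|=h-1$, so $|A\cup B|=h+1$; because $m\ge 2h+1$ we have $m-(h+1)\ge h$, and therefore $[m]\setminus(A\cup B)$ contains some $h$-subset $C$. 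This $C$ is disjoint from both $A$ and $B$, hence a common neighbor, giving a path $A,C,B$. Concatenating such length-two detours along a Johnson transformation from $A$ to $B$ produces the required walk.

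The hard part is this last step, and it is exactly where the hypothesis bites: $m\ge 2h+1$ is precisely the inequality guaranteeing the common neighbor $C$ of two sets differing in one element, and it cannot be relaxed, since $m=2h$ yields a perfect matching. I would also sanity-check the lower end of the range --- $2h+1\ge 3$ forces $h\ge 1$, and for $h=1$ the graph is $K_m$, where all three claims are trivial --- and, if one wishes, note that the same argument bounds $\diam(K(m,h))$ by $2h$, although the theorem asks only for connectivity.
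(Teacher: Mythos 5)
The paper offers no proof of this statement at all --- it is imported verbatim from Chen and Lih \cite{CheLih} as a known result --- so there is nothing internal to compare your argument against; I can only assess it on its own terms, and on those terms it is correct and complete. Regularity is exactly as you say: the neighbors of $A$ are the $h$-subsets of the $(m-h)$-set $[m]\setminus A$, giving degree $\binom{m-h}{h}$. For edge transitivity, your permutation $\pi$ built from three bijections ($A\to A'$, $B\to B'$, complement to complement, all well-defined because $A\cap B=\emptyset=A'\cap B'$ forces $|A\cup B|=|A'\cup B'|=2h$) does induce an automorphism, since the $S_m$-action preserves disjointness; in fact you have shown the stronger arc-transitivity. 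The connectivity argument --- ride the Johnson graph from $A$ to $B$ and replace each single-element exchange by a length-two detour through a common Kneser-neighbor $C\subseteq[m]\setminus(A\cup B)$, which exists precisely because $m-(h+1)\ge h$ --- is the standard one and is where the hypothesis $m\ge 2h+1$ is genuinely used, as your $m=2h$ counterexample confirms. Your observation that this also bounds the diameter by $2h$ is a correct (if unneeded) bonus. The one thing worth flagging is that the paper treats this as a black box and only ever consumes it through Theorem \ref{t:RegularConn} to extract the connectivity $\kappa(K(m,h))=\binom{m-h}{h}$ in Corollary \ref{c:KneserConn}; your proof supplies exactly the three hypotheses that corollary needs, so it would slot in cleanly if the authors wished to make the paper self-contained.
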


\begin{thm}
\label{t:RegularConn}
(Exercise 15(c) of Chapter 12 of \cite{Lovas})
If a graph $G$ is connected, edge transitive, and regular of degree $\d$, then $\k(G)=\d$.
\end{thm}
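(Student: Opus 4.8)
The plan is to establish $\k(G)=\l(G)=\d$, where $\l$ denotes edge connectivity. The bounds $\k(G)\le\l(G)\le\d$ are the classical Whitney inequalities (the minimum degree is $\d$ by regularity), so the entire content is the reverse inequality $\k(G)\ge\d$. This is immediate if $G$ is complete, since $\k(K_{\d+1})=\d$; so assume $G$ is not complete and, for contradiction, that $G$ has a vertex cut of size less than $\d$. Among all minimum vertex cuts $S$ of $G$ and all components of $G-S$, I would fix a pair consisting of a minimum vertex cut $S$ and a component (with vertex set) $A$ of $G-S$ for which $|A|$ is as small as possible, and set $B=V\setminus(S\cup A)\ne\emptyset$. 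Minimality of $S$ forces every vertex of $S$ to have a neighbour in $A$, so $\partial(A)=S$, where $\partial(X)=N(X)\setminus X$; and since $|S|<\d$, each vertex of $A$ has at least $\d-|S|\ge 1$ of its $\d$ neighbours inside $A$, so $G[A]$ contains an edge $e_0$.

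Next I would invoke edge-transitivity to push a copy of $e_0$ onto the cut: choose an automorphism $\phi$ of $G$ carrying $e_0$ to an edge $as$ with $a\in A$ and $s\in S$ (such an edge exists because $s$ has a neighbour in $A$). Since both endpoints of $e_0$ lie in $A$, both $a$ and $s$ lie in $\phi(A)$; in particular $\phi(A)$ meets $S$, whereas $A$ does not. I would then uncross $A$ with $\phi(A)$ using the submodularity of the vertex-boundary function $X\mapsto|\partial(X)|$. Because $\phi(A)$ is a component of $G-\phi(S)$ we have $|\partial(\phi(A))|=|S|$; moreover $A\cap\phi(A)$ is nonempty (it contains $a$) and $\partial(A\cap\phi(A))$ is a vertex cut separating $A\cap\phi(A)$ from the nonempty set $B$, since $N(A\cap\phi(A))\subseteq N(A)\subseteq A\cup S$ is disjoint from $B$. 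Hence $|\partial(A\cap\phi(A))|\ge|S|$, and submodularity should then pin this quantity down to exactly $|S|$, exhibiting $A\cap\phi(A)$ as a union of components of $G$ minus a minimum vertex cut. By the choice of $A$ each such component has at least $|A|$ vertices, while $A\cap\phi(A)\subseteq A$ has at most $|A|$; hence $A\cap\phi(A)=A$, so $A=\phi(A)$ by equality of cardinalities --- contradicting $\phi(A)\cap S\ne\emptyset=A\cap S$. This would give $\k(G)\ge\d$ and complete the proof.

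The step I expect to be the main obstacle is the word ``should'' in the previous paragraph: the squeeze $|\partial(A\cap\phi(A))|=|S|$ really needs the companion bound $|\partial(A\cup\phi(A))|\ge|S|$, and this can fail --- precisely when the closed neighbourhood of $A\cup\phi(A)$ exhausts $V$, so that $A\cup\phi(A)$ induces no vertex cut. The standard remedy is a ``four corners'' case analysis on the two separations $(A,S,B)$ and $(\phi(A),\phi(S),\phi(B))$: when the union misbehaves one works instead with whichever of $A\cap\phi(A)$, $A\cap\phi(B)$, $B\cap\phi(A)$, $B\cap\phi(B)$ is a minimum fragment contained in $A$ or in $B$, and reaches the same contradiction; executing this bookkeeping cleanly is the real work. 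A slicker route avoids the degeneracy altogether: first prove $\l(G)=\d$ by the same uncrossing applied to the edge-cut function $d(X)=|[X,\bar X]|$, which satisfies $d(X)\ge\l(G)$ for \emph{every} nonempty proper set $X$ (so the union never causes trouble), and then invoke the theorem of Watkins that a connected edge-transitive graph satisfies $\k(G)=\l(G)$.
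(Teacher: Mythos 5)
The first thing to say is that the paper offers no proof of this statement at all: it is quoted, with attribution, as Exercise 15(c) of Chapter 12 of Lov\'asz and used as a black box (only to derive Corollary \ref{c:KneserConn}), so there is no in-paper argument to measure yours against. Judged on its own, your outline is the standard Watkins--Mader ``atom'' argument: take a fragment $A$ of minimum size over all minimum cuts, observe that $G[A]$ contains an edge and that there is an edge from $A$ to $S=\partial(A)$, map one to the other by edge-transitivity, and derive a contradiction from the fact that $\phi(A)$ is again a minimum fragment meeting both $A$ and $S$. The non-degenerate half of your uncrossing is correct: when $B\cap\phi(B)\ne\emptyset$ the two separations cross properly, the two derived separators have sizes summing to $2\kappa(G)$, both corners yield genuine vertex cuts of size at least $\kappa(G)$, and minimality of $|A|$ forces $A\cap\phi(A)=A=\phi(A)$, contradicting $s\in\phi(A)\setminus A$.

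However, the step you flag as ``the real work'' is a genuine gap, and it is exactly where the theorem lives. When $B\cap\phi(B)=\emptyset$ (equivalently, $N[A\cup\phi(A)]=V$), the set $\partial(A\cup\phi(A))$ need not be a vertex cut, the bound $|\partial(A\cup\phi(A))|\ge\kappa(G)$ is unavailable, and the squeeze collapses; this case cannot be waved away, since nothing so far prevents the two atoms together with their boundaries from exhausting $V$. The four-corners bookkeeping that rescues the argument --- comparing quantities such as $|A\cap\phi(S)|$ and $|S\cap\phi(B)|$ and using $|A|\le|B|$ and $|\phi(A)|\le|\phi(B)|$ to exhibit a fragment strictly smaller than the atom --- is the entire content of the lemma that distinct atoms are disjoint, and you have named it rather than executed it. Your proposed escape hatch is also circular: ``the theorem of Watkins that a connected edge-transitive graph satisfies $\kappa(G)=\lambda(G)$'' is, together with $\lambda(G)=\delta$ (which already follows from Whitney's inequalities and regularity), precisely the statement being proven, and Watkins' proof of it is the atom argument you are trying to shortcut. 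So either carry out the four-corners analysis in full or simply cite the exercise, as the paper does.
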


\begin{cor}
\label{c:KneserConn}
For $m\ge 2h+1\ge 3$, $\k(K(m,h))=\binom{m-h}{h}$.
\end{cor}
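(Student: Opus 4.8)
The plan is to simply combine the two theorems quoted immediately above. Theorem~\ref{t:RegularConn} asserts that any graph that is simultaneously connected, edge transitive, and regular has connectivity equal to its common degree; Theorem~\ref{t:KneserReg} asserts that, under the hypothesis $m\ge 2h+1\ge 3$, the Kneser graph $K(m,h)$ enjoys all three of these properties and is regular of degree $\binom{m-h}{h}$. So I would set $G=K(m,h)$ and $\d=\binom{m-h}{h}$, invoke Theorem~\ref{t:KneserReg} to certify that the hypotheses of Theorem~\ref{t:RegularConn} are met, and then read off $\k(K(m,h))=\binom{m-h}{h}$ directly.

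There is essentially no obstacle here: the corollary is a one-line specialization, and the only things worth checking are that the degree formula $\binom{m-h}{h}$ is transcribed correctly from Theorem~\ref{t:KneserReg} and that the stated range $m\ge 2h+1\ge 3$ (equivalently $h\ge 1$ together with $m\ge 2h+1$) coincides exactly with the range in which Theorem~\ref{t:KneserReg} guarantees connectivity, edge transitivity, and regularity. As a sanity check, taking $h=1$ recovers $\k(K_m)=m-1$, and taking $(m,h)=(5,2)$ yields that the Petersen graph is $3$-connected; both agree with known facts, so I expect the write-up to be a two-sentence deduction.
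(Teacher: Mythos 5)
Your proposal is correct and matches the paper's proof exactly: both simply note that Theorem~\ref{t:KneserReg} certifies the hypotheses of Theorem~\ref{t:RegularConn} for $K(m,h)$ with degree $\binom{m-h}{h}$, and read off the connectivity. Nothing further is needed.
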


\begin{proof}
Theorem \ref{t:KneserReg} shows that $K(m,h)$ satisfies the hypothesis of Theorem \ref{t:RegularConn}, from which the result follows.
\end{proof}

We will use the following general form of Menger's Theorem (see Exercise 4.2.28 of \cite{West}).

\begin{thm}
\label{t:MengerGen}
Let $X$ and $Y$ be disjoint sets of vertices in a $k$-connected graph $G$.
Let $U(x)$, for $x\in X$, and $W(y)$, for $y\in Y$, be nonnegative integers such that $\sum_{x\in X}U(x) = \sum_{y\in Y}W(y) = k$.
Then $G$ has $k$ pairwise internally disjoint $X,Y$-paths so that
$U(x)$ of them start at $x$ and $W(y)$ of them end at $y$ for all $x\in X$ and $y\in Y$.
\end{thm}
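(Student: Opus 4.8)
The plan is to deduce the statement from the classical vertex form of Menger's theorem --- that in any graph the minimum size of an $s$--$t$ separator equals the maximum number of internally disjoint $s$--$t$ paths --- after two standard modifications of $G$: a blow-up that converts the demands $U,W$ into plain incidences, followed by the adjunction of a super-source and a super-sink. Concretely, I would first form $G^*$ from $G$ by replacing each $x\in X$ with $U(x)$ pairwise non-adjacent \emph{twins}, each inheriting the neighbourhood $N_G(x)$, and replacing each $y\in Y$ with $W(y)$ twins in the same way, leaving all other vertices and edges untouched; write $X^*,Y^*$ for the sets of twins arising from $X$ and from $Y$, so $|X^*|=|Y^*|=k$ by hypothesis. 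Then adjoin to $G^*$ a new vertex $s$ adjacent to all of $X^*$ and a new vertex $t$ adjacent to all of $Y^*$, obtaining $G'$. (Merely placing $U(x)$ parallel $s$--$x$ edges would not do, since every resulting path would still be forced through the single vertex $x$; the blow-up is precisely what spreads the demand at $x$ over $U(x)$ distinct vertices.)

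The heart of the argument is to show that every $s$--$t$ separator $S$ of $G'$ satisfies $|S|\ge k$. If $X^*\subseteq S$ or $Y^*\subseteq S$ this is immediate, so assume there are $a\in X^*\setminus S$ and $b\in Y^*\setminus S$; then $a$ and $b$ lie in different components of $G^*-S$. Set $S'=\{v\in V(G):\text{every twin of }v\text{ lies in }S\}$, so $|S'|\le|S|$ and the images of $a$ and $b$ avoid $S'$. I claim $S'$ separates the image of $a$ from the image of $b$ in $G$: given any path between them in $G-S'$, each of its vertices has some twin outside $S$, and since each twin of a vertex is adjacent in $G^*$ to every twin of each of that vertex's $G$-neighbours, the path lifts vertex-by-vertex to a walk in $G^*-S$ joining $a$ to $b$, contradicting their separation. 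Since $G$ is $k$-connected, no set of fewer than $k$ vertices separates two vertices, so $|S'|\ge k$ and hence $|S|\ge k$.

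Menger's theorem then yields $k$ internally disjoint $s$--$t$ paths $Q_1,\dots,Q_k$ in $G'$. Because $s$ has degree exactly $k$, their second vertices are distinct and exhaust $X^*$, and likewise their penultimate vertices exhaust $Y^*$; write $Q_i=s,a_i,\dots,b_i,t$ and let $R_i$ be the $a_i$--$b_i$ subpath. Internal disjointness of the $Q_i$ forces each $R_i$ to meet $X^*\cup Y^*$ only at its own endpoints, so contracting each block of twins back to its original vertex turns $R_i$ into a genuine (repetition-free) $X$--$Y$ path $P_i$ with no internal vertex in $X\cup Y$. Exactly $U(x)$ of the $a_i$ are twins of $x$, so exactly $U(x)$ of the $P_i$ start at $x$, and symmetrically $W(y)$ of them end at $y$; and a vertex interior to two of the $P_i$ would be interior to two of the $R_i$ (impossible) unless it lies in $X\cup Y$, where it is an endpoint rather than interior --- so $P_1,\dots,P_k$ are pairwise internally disjoint, as required.

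I expect the cut bound of the second paragraph to be the only real obstacle. The blow-up is forced on us in order to model multiplicities $U(x)\ge 2$, but it is exactly that step which makes separators of $G'$ awkward to compare with separators of $G$; the lifting argument bridging the two, together with the bookkeeping that the contracted paths neither repeat a vertex nor stray through a twin of the ``wrong'' endpoint, is where all the care goes. The rest is the standard source/sink-plus-Menger boilerplate.
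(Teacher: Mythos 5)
The paper does not actually prove Theorem~\ref{t:MengerGen}; it is quoted as known, with a pointer to Exercise 4.2.28 of \cite{West}, so there is no in-paper argument to measure yours against. What you have written is essentially the standard solution to that exercise, and it is correct: blowing each $x\in X$ up into $U(x)$ independent twins (rather than using parallel edges, for exactly the reason you give) and then attaching a super-source and super-sink is the right way to encode the multiplicities, and the one genuinely delicate step --- showing every $s$--$t$ separator $S$ of $G'$ has size at least $k$ by passing to $S'=\{v: \text{every twin of }v\text{ lies in }S\}$ and lifting a $(G-S')$-path vertex-by-vertex to a walk in $G^*-S$ --- is carried out correctly. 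The endgame bookkeeping (the second and penultimate vertices of the $k$ Menger paths exhausting $X^*$ and $Y^*$, internal vertices of each $R_i$ avoiding $X^*\cup Y^*$, and contraction yielding repetition-free, pairwise internally disjoint $X,Y$-paths with the prescribed endpoint counts) is also sound.

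One boundary case deserves a sentence you did not give it. The hypothesis allows $U(x)=0$ (or $W(y)=0$); your construction then deletes $x$ from $G^*$ outright, and the inequality $|S'|\le|S|$ --- which rests on every vertex of $G$ retaining at least one twin that must be paid for in $S$ --- breaks, since such an $x$ lands in $S'$ for free. The harmless fix is to discard zero-weight vertices from $X$ and $Y$ at the outset, reducing to the case of strictly positive weights (under the reading of ``$X,Y$-path'' in which only the endpoints, not the interior, are constrained by membership in $X\cup Y$; this is the reading the paper's application needs, since the multisets $\dP'$ and $\dD'$ there have positive multiplicity on their supports). With that one-line reduction stated, your proof is complete.
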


The following pebbling result is also known.

\begin{thm}
\label{t:KneserClass0}
\cite{Hurl}
For all $m \ge 5$ we have $\p(K(m,2)) = \binom{m}{2}$.
\end{thm}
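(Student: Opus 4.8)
The plan is to prove the equality by establishing $\p(K(m,2))\ge\binom{m}{2}$ and $\p(K(m,2))\le\binom{m}{2}$ separately, the first being routine: for any target $r$ the configuration placing exactly one pebble on every vertex other than $r$ has size $\binom{m}{2}-1$ and admits no pebbling step whatsoever, so it is unsolvable; hence $\p(K(m,2))\ge\binom{m}{2}$. (This is just the usual lower bound $\p(G)\ge|V(G)|$.)

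For the upper bound I would argue by contradiction, imitating the standard treatment of diameter-two graphs and then feeding in the explicit local structure of $K(m,2)$. Suppose $C$ has size $n=\binom{m}{2}$ and fails to solve a target $r$. Since $\diam(K(m,2))=2$ for $m\ge5$, the usual reductions apply: $C(r)=0$; $C(u)\le1$ for every $u\in N(r)$ (else push a pebble to $r$); and $C(v)\le3$ for every $v$ at distance $2$ from $r$ (else push two pebbles from $v$ to a common neighbour with $r$ and then on to $r$). Put $B=\{v\in V_2(r):C(v)\ge2\}$, and let $Z$ and $P$ be the sets of neighbours of $r$ carrying $0$ and $1$ pebble respectively. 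Unsolvability forces two more facts: for each $v\in B$ every neighbour of $v$ in $N(r)$ lies in $Z$ (otherwise push a pebble from $v$ onto such a neighbour, then to $r$); and for distinct $v,v'\in B$ the sets $N(v)\cap N(r)$ and $N(v')\cap N(r)$ are disjoint (otherwise push a pebble from each onto a common neighbour, then to $r$). So $\{\,N(v)\cap N(r):v\in B\,\}$ is a family of pairwise disjoint subsets of $Z$.

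Now the arithmetic of $K(m,2)$ should close the argument. Writing $r=\{a,b\}$, every vertex at distance $2$ has the form $\{a,z\}$ or $\{b,z\}$, and its neighbours in $N(r)$ are precisely the $2$-subsets of $[m]\setminus\{a,b,z\}$, so $|N(v)\cap N(r)|=\binom{m-3}{2}$ for all such $v$; also $|N(r)|=\binom{m-2}{2}$ by Theorem \ref{t:KneserReg}, whence $|V_2(r)|=\binom{m}{2}-1-\binom{m-2}{2}=2(m-2)$. Disjointness gives $|B|\binom{m-3}{2}\le|Z|$, while comparing the pebble count $n=|P|+\sum_{v\in V_2(r)}C(v)\le|P|+3|B|+\bigl(|V_2(r)|-|B|\bigr)$ with the vertex count $n=1+|P|+|Z|+|V_2(r)|$ gives $|Z|+1\le2|B|$. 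Combining the two yields $(|Z|+1)\binom{m-3}{2}\le2|Z|$, i.e. $|Z|\bigl(\binom{m-3}{2}-2\bigr)\le-\binom{m-3}{2}$, which is impossible as soon as $\binom{m-3}{2}\ge3$, that is, for every $m\ge6$. The boundary case $m=5$ (the Petersen graph) is where I expect the genuine work to sit: there $\binom{m-3}{2}=1$, the inequalities above are no longer contradictory, and one is left with a short list of candidate configurations (a single distance-$2$ vertex carrying $2$ or $3$ pebbles, at most three zeros among $N(r)$, and a pebble everywhere else) each of which must be checked by hand to admit a one- or two-step solution to $r$ — or, more cheaply, one invokes the classical fact that the Petersen graph is of Class $0$. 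Alternatively, the whole upper bound can be routed through the criterion that a $3$-connected graph of diameter $2$ is of Class $0$, feeding in $\k(K(m,2))=\binom{m-2}{2}\ge3$ from Corollary \ref{c:KneserConn} and proving that criterion via Menger's theorem (Theorem \ref{t:MengerGen}); but the direct count above seems the shortest route for $m\ge6$.
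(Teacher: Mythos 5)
First, a point of comparison: the paper does not prove this statement at all --- it imports it from \cite{Hurl} as a known result --- so there is no in-paper proof to measure you against. Judged on its own terms, your argument for $m\ge 6$ is correct and complete: I checked the counts ($|N(r)|=\binom{m-2}{2}$, $|V_2(r)|=2(m-2)$, $|N(v)\cap N(r)|=\binom{m-3}{2}$), the unsolvability constraints ($C(r)=0$, $C(u)\le 1$ on $N(r)$, $C(v)\le 3$ on $V_2(r)$, the sets $N(v)\cap N(r)$ for $v\in B$ pairwise disjoint and contained in $Z$), and the two inequalities $|B|\binom{m-3}{2}\le|Z|$ and $|Z|+1\le 2|B|$, whose combination is indeed contradictory exactly when $\binom{m-3}{2}\ge 3$, i.e.\ $m\ge 6$. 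This is a clean, self-contained derivation.

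The gap is the $m=5$ case, which you flag but do not close, and neither of your two proposed escapes is satisfactory as written. Invoking ``the classical fact that the Petersen graph is of Class 0'' is circular: that fact \emph{is} the $m=5$ instance of the theorem you are proving (it is fine as an external citation, but then you are citing rather than proving). Routing everything through ``a $3$-connected diameter-two graph is Class 0'' would work uniformly for $m\ge 5$ since $\k(K(m,2))=\binom{m-2}{2}\ge 3$, but that criterion is itself a substantial theorem whose proof you would have to supply. The concrete reason your counting framework stalls at $m=5$ is that it only uses edges between $V_2(r)$ and $N(r)$; when $\binom{m-3}{2}=1$ the surviving configurations (e.g.\ one big vertex in $V_2(r)$, one zero in $N(r)$, one pebble elsewhere) can only be ruled out by exploiting the edges \emph{inside} $V_2(r)$, i.e.\ by sliding a pebble from a big vertex through an occupied distance-two vertex and an occupied neighbour of $r$. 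That is precisely the analysis carried out in Claim A of Lemma \ref{l:PetersenUpperBound} of this paper, which shows the unique $r$-unsolvable configuration of size $9$ on $K(5,2)$ is $J_r$; appending that claim (or reproducing its short case analysis) would make your proof complete for all $m\ge 5$.
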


% ---------------------------
\subsection{Results}

Let $n=|V(K(m,2))|=\binom{m}{2}$.
For any vertex $r$, we define the configuration $C_{t,1}$ to have $2t-1$ pebbles on one $x\in V_2(r)$, and one pebble on every other vertex except for $r$. 
Note that $|C_{t,1}| = 2t - 1 + n - 2 = n + 2t - 3$.
In addition, we define the configuration $C_{t,2}$ to have $4t-1$ pebbles on one $y\in V_2(r)$, and one pebble on every other $v\in V_2(r)$.
Observe that $|C_{t,2}| = 4t - 1 + 2(m-2) - 1 = 4t + 2(m-2) - 2$.
We prove in Lemma \ref{l:KneserLowerBound} that each $C_{t,i}$ is $r$-unsolvable.

Define $p_1(m,t) = |C_{t,1}|+1 = n+2t-2$, $p_2(m,t) = |C_{t,2}|+1 = 4t+2m-5$, $p(m,t) = \max_i p_i(m,t)$, and $t_0 = \deg(K(m,2))/2 = \binom{m-2}{2}/2$.
Then 
$$p(m,t)=\Bigg\{
\begin{tabular}{ll}
    $p_1(m,t)$&{\rm for\ }$t\le t_0$,\\
    $p_2(m,t)$&{\rm for\ }$t\ge t_0$.\\
\end{tabular}
$$

\begin{lem}
\label{l:KneserLowerBound}
Let $G=K(m,2)$ for any $m \ge 5$.
Then $\p_t(G) \ge p(m,t)$.
\end{lem}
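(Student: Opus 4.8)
The plan is to show that each $C_{t,i}$ is $r$-unsolvable, which immediately gives $\p_t(G) > |C_{t,i}|$, hence $\p_t(G) \ge p_i(m,t)+\ldots = \ldots$, wait — $\p_t(G) \ge |C_{t,i}|+1 = p_i(m,t)$, and taking the maximum over $i$ yields $\p_t(G) \ge p(m,t)$. So the real content is the unsolvability of the two configurations, and this is the step I expect to carry the entire weight of the lemma.

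For $C_{t,1}$, recall $V(K(m,2))$ has diameter $2$ (since $m\ge 5$), so every vertex other than $r$ and the ``heavy'' vertex $x\in V_2(r)$ carries exactly one pebble, which is useless on its own — a single pebble can never move. The only usable pebbles are the $2t-1$ on $x$. Since $x\in V_2(r)$, any pebble reaching $r$ from $x$ must traverse at least two edges, so it must pass through a common neighbor $w$ of $x$ and $r$. To get $t$ pebbles onto $r$ we would need to deliver $2t$ pebbles to the set $N(x)\cap N(r)$ (or deliver fewer but recruit help from the scattered single pebbles). I would argue: from $x$ alone at most $\lfloor(2t-1)/2\rfloor = t-1$ pebbles can be pushed off $x$ in total, landing on neighbors of $x$; even if all $t-1$ landed on a single common neighbor $w$ of $x$ and $r$, that gives only $\lfloor(t-1)/2\rfloor<t$ pebbles that could then be pushed toward $r$ — far short. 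The only way to do better is to combine a pebble arriving at some vertex $v$ with the single resident pebble already at $v$; but each such combination consumes two pebbles from $x$'s ``budget'' to produce one movable pair, and a careful accounting (each delivered pebble costs $2$ from the stack at $x$, each resident single pebble is worth $1$, and one needs a pair on a common neighbor to finish) shows the budget $2t-1$ is exactly one short of what is needed to $t$-fold solve $r$. I would formalize this with a weight/potential argument: assign weight $2^{-\dist(v,r)}$ to pebbles (the standard pebbling weight function), so a solution that $t$-fold solves $r$ needs initial weight at least $t$; the heavy stack contributes $(2t-1)/4$ and each single resident pebble at distance $2$ contributes $1/4$ while those at distance $1$ contribute $1/2$, and one checks $|C_{t,1}|$ is tuned so the total weight is exactly $t - \tfrac14 < t$ when $t \le t_0$ — the case restriction matters because when $t$ is large the configuration $C_{t,2}$ becomes the binding one.

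For $C_{t,2}$, all $4t-1+ (m-2)-1$ pebbles sit in $V_2(r)$, which has size $m-2$, so again every pebble is at distance exactly $2$ from $r$ and every step toward $r$ halves the count. The weight-function argument is cleanest here: each pebble at distance $2$ has weight $1/4$, so $C_{t,2}$ has total weight $(4t-1 + (m-3))/4 = t + (m-4)/4 - \ldots$; I need to verify this is strictly less than $t$, which forces a more refined argument than raw weight, since for large $m$ the naive weight exceeds $t$. The point must instead be structural: pebbles sitting on vertices of $V_2(r)$ that are pairwise non-adjacent and not adjacent to a common neighbor of $r$ cannot be consolidated, so I would use the fact that $V_2(r)$ in $K(m,2)$ — the set of $2$-subsets meeting $\{a,b\}=r$ — has limited adjacency structure, and a single pebble stranded on such a vertex is dead weight. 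The heavy vertex $y$ supplies $\lfloor(4t-1)/2\rfloor = 2t-1$ pebbles to $N(y)$, and then $\lfloor (2t-1 + \text{help})/2\rfloor$ onward to $r$; as with $C_{t,1}$, the arithmetic is rigged so that even optimal routing and optimal use of the scattered singles leaves one short of $t$.

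The main obstacle is the second case and, more generally, getting the unsolvability arguments to be airtight: the crude weight function $2^{-\dist(\cdot,r)}$ does not by itself certify unsolvability here (the totals are too close to $t$), so I will need the finer observation that isolated single pebbles on vertices of $V_2(r)$ whose neighborhoods don't interact cannot all be harvested simultaneously — essentially a matching/covering bound on how many of the scattered singles can ever be paired up with incoming pebbles along a valid solution. Making that counting rigorous (probably by induction on the number of pebbling steps, tracking the invariant ``weight toward $r$ plus credit for still-unpaired singles stays below $t$'') is where the real work lies; the reduction from unsolvability of $C_{t,i}$ to the stated inequality $\p_t(G)\ge p(m,t)$, and the piecewise description of $p(m,t)$ via comparing $p_1$ and $p_2$ at $t_0$, are routine.
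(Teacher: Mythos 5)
Your reduction of the lemma to the $r$-unsolvability of $C_{t,1}$ and $C_{t,2}$ is exactly right, and that is indeed where all the content lies. But you have not actually proved unsolvability of either configuration: you propose the standard weight function $2^{-\dist(v,r)}$, correctly observe that it cannot work (for large $m$ the total weight of $C_{t,1}$ is roughly $\binom{m-2}{2}/2$, far above $t$ --- so your claim that it ``is tuned so the total weight is exactly $t-\tfrac14$'' is false), and then defer the real argument to an unspecified ``matching/covering bound'' and an ``induction on the number of pebbling steps'' that you explicitly say is where the work lies. That deferred step is the entire lemma, so as written this is a genuine gap, not a proof.

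The missing idea is the Potential Lemma (Lemma \ref{l:potlem}): the quantity $\pot(C)=\sum_v\lfloor C(v)/2\rfloor$ never increases under a pebbling step, a lone pebble contributes nothing to it, and every pebble delivered to a target carrying no pebble of $C$ costs at least one unit of potential; hence if $\supp(C)\cap\supp(D)=\emptyset$ and $C$ solves $D$, then $\pot(C)\ge|D|$. This instantly kills $C_{t,1}$: all the scattered single pebbles are dead weight, so $\pot(C_{t,1})=\lfloor(2t-1)/2\rfloor=t-1<t$. For $C_{t,2}$ one needs the one extra observation that the whole potential $2t-1$ sits on a single vertex of $V_2(r)$ while $V_1(r)$ is empty, so at most $2t-1$ pebbles can ever be placed on $V_1(r)$, leaving potential at most $t-1$ there --- again short of $t$. (Incidentally, $|V_2(r)|=2(m-2)$, not $m-2$, so your size count for $C_{t,2}$ is also off.) Your instinct that ``isolated single pebbles cannot all be harvested'' is precisely what the potential function formalizes; without it, or an equivalent invariant, the proof is incomplete.
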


\begin{proof}
Because $\pot(C_{t,1})=t-1$, the Potential Lemma \ref{l:potlem}(\ref{potlem2}) implies that $C_{t,1}$ is not $t$-fold $r$-solvable for some root $r$.
Furthermore, we have $\pot(C_{t,2}) = 2t - 1$, with the only potential vertex $u$ at distance two from $r$.
Thus $C_{t,2}$ can place at most $2t-1$ pebbles on $V_1(r)$, resulting in a configuration $C'$ with potential at most $t-1$.
By the Potential Lemma \ref{l:potlem}(\ref{potlem2}), $C'$, and hence $C$, is not $t$-fold $r$-solvable.
\end{proof}

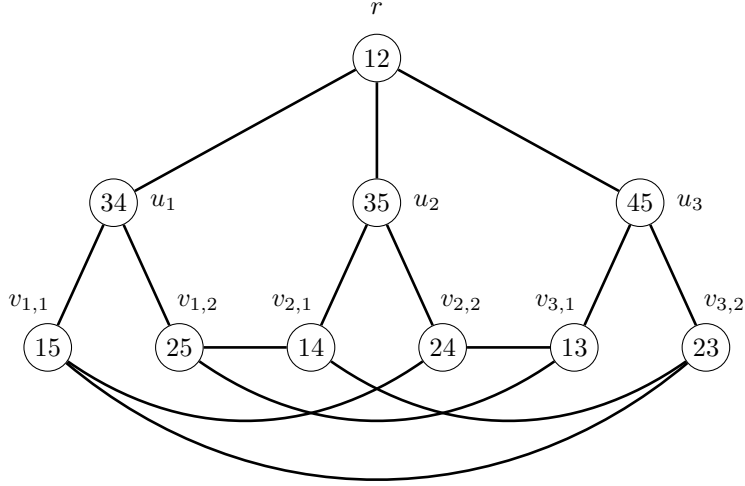
\begin{figure}
\begin{center}
\begin{tikzpicture}[scale=1.75]
\tikzstyle{every node}=[draw,circle,minimum size=18pt,inner sep=0.2pt,label distance=0.02cm]
\def \h {1.1}
\def \d {3}
\def \a {35}
\def \b {40}
\draw node (12) [label=above: {$r$}] at (2.5,2*\h) {12};
\draw node (34) [label=right: {$u_1$}] at (0.5,\h) {34};
\draw node (35) [label=right: {$u_2$}] at (2.5,\h) {35};
\draw node (45) [label=right: {$u_3$}] at (4.5,\h) {45};
\draw node (15) [label=90+\d: {$v_{1,1}$}] at (0,0) {15};
\draw node (25) [label=90-\d: {$v_{1,2}$}] at (1,0) {25};
\draw node (14) [label=90+\d: {$v_{2,1}$}] at (2,0) {14};
\draw node (24) [label=90-\d: {$v_{2,2}$}] at (3,0) {24};
\draw node (13) [label=90+\d: {$v_{3,1}$}] at (4,0) {13};
\draw node (23) [label=90-\d: {$v_{3,2}$}] at (5,0) {23};
\draw[line width=1.0pt] (12) -- (34);
\draw[line width=1.0pt] (12) -- (35);
\draw[line width=1.0pt] (12) -- (45);
\draw[line width=1.0pt] (34) -- (15);
\draw[line width=1.0pt] (34) -- (25);
\draw[line width=1.0pt] (35) -- (14);
\draw[line width=1.0pt] (35) -- (24);
\draw[line width=1.0pt] (45) -- (13);
\draw[line width=1.0pt] (45) -- (23);
\draw[line width=1.0pt] (25) -- (14);
\draw[line width=1.0pt] (24) -- (13);
\draw[line width=1.0pt] (15) to[out=-\a,in=180+\a] (24);
\draw[line width=1.0pt] (25) to[out=-\a,in=180+\a] (13);
\draw[line width=1.0pt] (14) to[out=-\a,in=180+\a] (23);
\draw[line width=1.0pt] (15) to[out=-\b,in=180+\b] (23);
\end{tikzpicture}\\
\caption{The Petersen graph $P=K(5,2)$, labeled as in the proof of Lemma \ref{l:PetersenUpperBound}.}
\label{fig:petersen}
\end{center}
\end{figure}

\begin{lem}
\label{l:PetersenUpperBound}
Let $G=K(5,2)$ and $|D| = t$.
Then $\p(G, D) \le p(5,t)$.
\end{lem}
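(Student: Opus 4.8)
The plan is to prove Lemma~\ref{l:PetersenUpperBound} by induction on $t$, following the template of Theorem~\ref{t:2PathTarget}: from a configuration $C$ of size $p(5,t)$ peel off one target $r\in\dD$ together with an inexpensive $r$-solution $\s$, then apply the inductive hypothesis to $C-\s$ and $D-r$. The arithmetic to keep in mind is that $p(5,1)=10$, that $p(5,t)=p_2(5,t)=4t+5$ for all $t\ge 2$ (since $p_2\ge p_1$ precisely for $t\ge t_0=3/2$), and hence that the budget $p(5,t)-p(5,t-1)$ available for $\cost(\s)$ equals $4$ when $t\ge 3$ but only $3$ when $t=2$.

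For the base case $t=1$ one simply invokes $\pi(K(5,2))=10=p(5,1)$ from Theorem~\ref{t:KneserClass0}. For the inductive step $t\ge 3$, fix $r\in\dD$ and let $T$ be a BFS spanning tree of $K(5,2)$ rooted at $r$. Since the Petersen graph has girth $5$, the vertex $r$ has three pairwise non-adjacent neighbours and each of the six vertices at distance $2$ has exactly one neighbour among them, so $T$ is $r$ joined to three children each bearing two leaves, whose maximum $r$-path partition is $(2,2,2,1,1,1)$; Fact~\ref{f:tree} then gives $\pi(T,r)=4+(4+4+2+2+2)-6+1=13$. As $|C|=4t+5\ge 17>13$ and $T$ is $r$-greedy and distance-preserving, the Cheap Lemma~\ref{l:CheapLemma} supplies an $r$-solution $\s$ with $\cost(\s)\le 2^{\ecc(r)}=4$, whence $|C-\s|\ge 4t+1=p(5,t-1)$ and induction applied to $D-r$ finishes the case.

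The substance of the proof is the remaining base case $t=2$: every $C$ with $|C|=13$ must solve every $D$ with $|D|=2$. If some target $v$ has $C(v)\ge 1$, delete one pebble from $v$, leaving $12>10=\pi(K(5,2))$ pebbles, which solve $D-v$; so assume $C$ vanishes on $\supp(D)$. Here the $t\ge 3$ machinery is not quite enough, since the budget is only $3$ while the BFS tree above guarantees only $\cost(\s)\le 4$ (and indeed $13$ pebbles piled on a single vertex of $V_2(r)$ admit no $r$-solution of cost $\le 3$). I would use the explicit structure of Figure~\ref{fig:petersen}: around a target $r$, $V_1(r)=\{u_1,u_2,u_3\}$ is independent, $V_2(r)$ induces a $6$-cycle, and each $w\in V_2(r)$ has a unique neighbour among the $u_i$. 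A short check shows that the only $r$-solutions of cost $\le 3$ use either some $u_i$ with two pebbles or some $w\in V_2(r)$ with two pebbles whose $V_1$-neighbour is non-empty; hence if no such solution exists then every $u_i$ holds at most one pebble and every $w$ with $C(w)\ge 2$ has an empty $V_1$-neighbour. From this---together with $\k(K(5,2))=3$ from Corollary~\ref{c:KneserConn} and Menger's Theorem~\ref{t:MengerGen} to route pebbles into $r$, and the Potential Lemma~\ref{l:potlem} to see when routing is impossible---I would show that such a $13$-pebble configuration still $2$-fold solves $r$, which is consistent with the fact that the extremal $2$-fold-unsolvable configuration $C_{2,2}$ has only $12$ pebbles. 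When $D=\{r_1,r_2\}$ is supported on two distinct vertices, a cost-$\le 3$ solution for either one leaves $\ge 10=\pi(K(5,2))$ pebbles to solve the other, so it suffices to rule out the simultaneous absence of such solutions at both $r_1$ and $r_2$, which the same neighbourhood analysis handles.

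The main obstacle is exactly this $t=2$, all-targets-zero case: it is equivalent to pinning down $\pi_2(K(5,2))=13$ together with the two-target pebbling number of the Petersen graph, and the near-extremal configurations---those close to $C_{2,2}$, or otherwise with pebbles concentrated far from the target---have no cheap solution and must be routed explicitly. I expect the cleanest way through is a small case analysis organized by how many of $u_1,u_2,u_3$ carry a pebble, each case resolved by an ad hoc slide combined with Menger-type routing; by contrast the cases $t=1$ and $t\ge 3$ are routine once the BFS tree's pebbling number $13$ is in hand.
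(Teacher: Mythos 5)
Your base case $t=1$ and your inductive step for $t\ge 3$ are correct and essentially match the paper: the paper's Claim~B states exactly that $|C|\ge 13$ forces an $r$-solution of cost at most $4$, proved there by pigeonhole on the sets $U_i=\{u_i,v_{i,1},v_{i,2}\}$ rather than via the Cheap Lemma applied to the BFS tree, but your computation $\pi(T,r)=13$ from Fact~\ref{f:tree} is right and the two routes are interchangeable. The budget arithmetic ($4$ available for $t\ge 3$, only $3$ for $t=2$) is also correctly identified, as is the reduction to configurations vanishing on the targets.

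The genuine gap is the $t=2$ case, which you correctly flag as the substance of the lemma but leave as a plan (``a small case analysis \ldots resolved by an ad hoc slide combined with Menger-type routing''). The missing device is a rigidity statement for size-$9$ unsolvable configurations: the paper's Claim~A shows that the \emph{only} $r$-unsolvable configuration of size $9$ is $J_r$ (one pebble on every vertex except $r$), proved by a short argument on the sets $U_i$. With that in hand the $t=2$ case needs no case analysis at all and no Menger routing: take any cost-$\le 4$ solution $\s$ for $r_1$; if the remaining $9$ pebbles fail to solve $r_2$ they must be exactly $J_{r_2}$, so $r_2$ is the unique empty vertex of $C$ and any big vertex of $C$ reaches $r_2$ by a greedy slide of cost at most $3$, after which $13-3=10=\pi(K(5,2))$ pebbles remain to solve $r_1$. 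Your alternative plan of directly ruling out the simultaneous absence of cost-$\le 3$ solutions at both targets is not obviously salvageable --- configurations such as $C_{2,2}$ (or $13$ pebbles stacked on one vertex of $V_2(r)$) admit no cost-$\le 3$ solution to $r$ at all, yet still $2$-fold solve $r$, so you cannot avoid analyzing what happens \emph{after} a more expensive first solution; that analysis is precisely what Claim~A packages. Until the $t=2$ case is actually carried out, the proof is incomplete.
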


\begin{proof}
Let $r$ be any target in $D$.
Label the vertices as follows (see Figure \ref{fig:petersen}):
$V_1(r) = \{u_i\mid i\in [3]\}$ and $V_2(r) = \{v_{i,j}\mid i\in [3], j\in [2]\}$, with adjacencies $u_i\sim v_{i,j}$ for all $i$ and $j$ and $v_{i,j}\sim v_{i',j'}$ if and only if $i\not= i'$ and $j\not= j'$.
Define the configuration $J_r$ by $J_r(r)=0$ and $J_r(x)=1$ otherwise.
Also, define $U_i=\{u_i,v_{i,1},v_{i,2}\}$ and write $J_{r,i}$ for the restriction of $J_r$ to $U_i$.
For a configuration $C$, we say that a vertex $v$ is {\it big} if $C(v)\ge 2$.
\\
{\bf Claim A.} 
If $C$ is $r$-unsolvable of size 9 then $C=J_r$.
\\
{\it Proof.}
Let $C$ be $r$-unsolvable of size 9.
Suppose that $\pot(C)>0$.
Since $C$ does not solve $r$, no neighbor of $r$ is big.
If $C(U_i)\ge 5$ then $C$ solves $r$, either by having two pebbles on $u_i$, one pebble on $u_i$ and moving a pebble to $u_i$, or no pebbles on $u_i$ and moving two pebbles to $u_i$.
Thus we may assume that $C(U_i)\le 4$ for all $i$.

If some $C(U_i)=4$ then some $C(U_{i'})\ge 3$.
If $U_{i'}$ has a big vertex, it can move a fifth pebble into $U_i$, which can then solve $r$.
Thus $C(U_{i'})=J_{r,{i'}}$.
However, this allows a big vertex from $U_i$ to slide through $U_{i'}$ to $r$, which is a contradiction.

Hence $C(U_i)=3$ for all $i$.
If $C$ has two potentials then two pebbles can be moved into a third $U_i$, which yields 5 pebbles that can solve $r$.
Thus we may assume that $C$ has exactly one potential, say in $U_i$.
But then some $i'$ has $C(U_{i'})=J_{r,{i'}}$, and so the big vertex can slide through $U_{i'}$ to $r$, a contradiction.

Hence $\pot(C)=0$; i.e. $C=J_r$.
\hfill $\diamondsuit$
\\
{\bf Claim B.}
If $|C|\ge 13$ then $C$ is $r$-cheap.
\\
{\it Proof.}
By the pigeonhole principle we have that some $C(U_i)\ge 5$.
Thus $U_i$ has a big vertex $w$.
We have a cost-2 solution if $w=u_i$, so consider that $w=v_{i,j}$ for some $j$.
We have a cost-3 solution if $u_i$ has a pebble, so consider that $u_i$ is empty.
Then $U_i$ has two potentials, which produces a cost-4 solution.
\hfill $\diamondsuit$

Because $t_0=\binom{3}{2}/2=1.5$, we have $p(5,t)=p_1(5,t)$ for $t=1$ and $p(5,t)=p_2(5,t)$ for $t\ge 2$.
The statement is true for $t=1$ because we know from Theorem \ref{t:KneserClass0} that $\p(K(5,2))=10=p_1(5,1)$.

For $|D|=t=2$ we have $|C|=p_2(5,2)=13$.
Let $r_1$ and $r_2$ be the two (not necessarily distinct) vertices with $D(r_i)>0$.
Claim B implies that $C$ has a cheap $r_1$-solution $\s$.
If $C-\s$ has an $r_2$-solution then we're done, so assume otherwise.
It must be then that $|C-\s|=9$, and so Claim A implies that $C-\s=J_{r_2}$.
Thus $r_2$ is the only empty vertex in $C$ (it is the only empty vertex in $C-\s$, and if it is not empty in $C$ then $C$ solves $D$ without moving), so any big vertex has a greedy slide; i.e. an $r_2$-solution $\s'$ of cost at most 3.
Then $C-\s'$ has size at least 10, which provides an $r_1$-solution.
Thus $C$ solves $D$.

For $|D|=t\ge 3$ we have $|C|=p_2(5,t)=4t+5$.
Fix a target $r$ with $D(r) > 0$.
Then Claim B implies that $C$ has a cheap $r$-solution $\s$, and so $|C-\s|\ge |C|-4=4(t-1)+5=p_2(5,t-1)$.
By induction, $C- \s$ is $(D-r)$-solvable.
Hence $C$ is $D$-solvable.
\end{proof}

\begin{fct}
\label{f:CommonNeighbors}
Let $G=K(m,2)$ for some $m > 5$ and let $r$ be any vertex.
Then for every (not necessarily distinct) $u, v\in V_2(r)$ there exists a $w\in V_1(r)$ such that $w\in N(u)\cap N(v)$.
\end{fct}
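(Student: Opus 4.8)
The plan is to argue directly from the set-system description of $K(m,2)$, treating $r$, $u$, and $v$ simply as $2$-subsets of $[m]$. First I would record what $V_2(r)$ looks like: since $\diam(K(m,2))=2$ for $m\ge 5$, a vertex $x\ne r$ lies in $V_2(r)$ precisely when $x$ is \emph{not} disjoint from $r$, i.e. $x\cap r\ne\emptyset$; as $|x|=|r|=2$ and $x\ne r$, this forces $|x\cap r|=1$ and hence $|x\setminus r|=1$. Consequently, for any $u,v\in V_2(r)$ (distinct or not), the set $S=r\cup u\cup v$ satisfies $|S|\le |r|+|u\setminus r|+|v\setminus r|\le 2+1+1=4$.

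Next, because $m>5$ we have $|[m]\setminus S|\ge m-4\ge 2$, so I may choose any $2$-subset $w\subseteq[m]\setminus S$. Such a $w$ is a vertex of $K(m,2)$ that is disjoint from each of $r$, $u$, and $v$. Disjointness from $r$ makes $w$ adjacent to $r$, so $w\in V_1(r)$; disjointness from $u$ and from $v$ makes $w$ adjacent to each of them, so $w\in N(u)\cap N(v)$. This $w$ is exactly the common neighbor the statement asks for.

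There is essentially no real obstacle: the only point needing a word is the characterization of $V_2(r)$, which is immediate from the diameter-$2$ fact quoted in the definition subsection, together with the trivial observation that the crude bound $|S|\le 4$ combined with $m\ge 6$ is precisely what guarantees two free symbols. I would also remark in passing that this is where the hypothesis $m>5$ is essential — for $m=5$ one can take $r=\{1,2\}$, $u=\{1,3\}$, $v=\{2,4\}$, so that $S=\{1,2,3,4\}$ leaves only a single free element and no valid $w$ exists — which is why the Petersen graph $K(5,2)$ was handled separately in Lemma~\ref{l:PetersenUpperBound}.
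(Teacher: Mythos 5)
Your proof is correct and follows essentially the same argument as the paper: observe that $|x\cap r|=1$ for $x\in V_2(r)$, so $|r\cup u\cup v|\le 4$, and $m\ge 6$ leaves two free symbols from which to form the common neighbor $w$. The added remark explaining why $m=5$ fails is a nice touch but does not change the approach.
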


\begin{proof}
For every $x\in V_2(r)$, we have $|x\cap r|=1$.
Thus, for every $u,v\in V_2(r)$, we have $|(r\cup u\cup v)|\le 4$.
Because $m\ge 6$, there exists $y=\{c,d\}\subseteq [m]-(r\cup u\cup v)$.
Thus $y\in N(r)\cap N(u)\cap N(v)$.
\end{proof}

\begin{lem}
\label{l:KneserUpperBound}
Let $G = K(m,2)$ for some  $m>5$ and let $|D| = t$.
Then $\p(G, D) \le p(m,t)$.
\end{lem}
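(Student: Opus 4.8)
The plan is to induct on $t$, taking as the base case $t=1$, where $p(m,1)=p_1(m,1)=n=\p(K(m,2))$ by Theorem \ref{t:KneserClass0}. For $t\ge 2$, fix a target $r$ with $D(r)>0$; the goal is to produce an $r$-solution $\s$ whose cost does not exceed $p(m,t)-p(m,t-1)$, for then $|C-\s|=|C|-\cost(\s)\ge p(m,t-1)$, and applying the induction hypothesis to the distribution $D-r$ of size $t-1$ shows $C-\s$ solves $D-r$, hence $C$ solves $D$. A direct computation (using that $p_1$ and $p_2$ agree at $t=t_0$, which is exactly the defining property of $t_0$) gives $p(m,t)-p(m,t-1)=2$ when $t\le t_0$ and $=4$ when $t>t_0$. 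So the whole task splits into producing a cost-$\le 2$ solution in the first regime and a cost-$\le 4$ solution in the second.

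For the regime $t>t_0$, where $|C|=p_2(m,t)$, I would first bound the size of any configuration that has \emph{no} cost-$\le 4$ $r$-solution. Such a $C$ must have $C(r)=0$, no vertex of $V_1(r)$ with two or more pebbles, and — here Fact \ref{f:CommonNeighbors} is the key tool — at most one vertex $v^*$ of $V_2(r)$ with two or more pebbles; moreover, if $v^*$ exists then $C(v^*)\le 3$ and every neighbor of $v^*$ in $V_1(r)$ is empty (otherwise a slide of length $\le 3$, or a double push of $v^*$ through a common $V_1(r)$-neighbor followed by one more step, reaches $r$ within cost $4$). Consequently $|C|\le\max\{\,|V_1(r)|+|V_2(r)|,\ (m-3)+(2m-2)\,\}=\max\{\binom{m-2}{2}+2(m-2),\,3m-5\}$, and one checks this is strictly less than $p_2(m,t_0+1)\le p_2(m,t)$. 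Hence $C$ always has a cost-$\le 4$ $r$-solution and the induction closes.

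For the regime $2\le t\le t_0$, where $|C|=p_1(m,t)=n+2t-2$, a cost-$\le 2$ $r$-solution exists exactly when $C(r)>0$ or $V_1(r)$ contains a vertex with two or more pebbles; if this holds at some target we reduce there and use induction. Otherwise we are in the ``spread'' case: for every target vertex $r$ we have $C(r)=0$ and $C$ is $0/1$-valued on $V_1(r)$. Partition $V$ into the set $R$ of target vertices, the set $L$ of non-target vertices adjacent to some target, and the remaining set $B$. Then every vertex carrying a potential lies in $B$; every $b\in B$ is at distance exactly $2$ from every target and has precisely $\binom{m-3}{2}\ge t_0$ neighbors in each $V_1(r)$; and since $C$ vanishes on $R$ and is $\le 1$ on $L$, one gets $C(B)\ge |C|-|L|$ and hence $\pot(C)=\pot(C|_B)\ge\lceil(2t-1)/2\rceil=t$, with extra potential exactly when some target has many empty neighbors. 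The remaining work is transport: for each target $r$, route $D(r)$ pebbles (or $2D(r)$ when $V_1(r)$ is entirely empty) from the potential-bearing vertices of $B\subseteq V_2(r)$ onto distinct vertices of $V_1(r)$ and then push each to $r$; here the connectivity $\k(K(m,2))=\binom{m-2}{2}$ from Corollary \ref{c:KneserConn}, together with Theorem \ref{t:MengerGen} and the large common-neighbor counts, ensures no bottleneck in assigning potentials to sinks or in keeping the sinks of different targets disjoint.

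The main obstacle is precisely this ``spread'' case: one must show the potential forced by the size $n+2t-2$ is not just at least $t$ but is in fact exactly enough, even when some targets require two-for-one transport through empty neighbors, and this is where the precise value $t_0=\binom{m-2}{2}/2$ is essential — the worst configuration, with all of $C$ piled onto $V_2(r)$ for a single target $r$ so that its $V_1(r)$ is empty, has potential meeting the needed bound $2t$ with equality exactly at $t=t_0$. Making this potential-versus-transport accounting rigorous, and scheduling the Menger-routed pebbling steps so they do not interfere across targets, is the crux of the argument.
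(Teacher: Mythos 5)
Your proposal has the right skeleton in the large-$t$ regime, but it contains two genuine gaps. First, the claimed increment $p(m,t)-p(m,t-1)=4$ for $t>t_0$ is false at the crossover. Since $p_2(m,s)-p_1(m,s)=2(s-t_0)$, when $t-1<t_0<t$ one gets $p(m,t)-p(m,t-1)=p_2(m,t)-p_1(m,t-1)=2(t-t_0)+2$, which equals $4$ only if $t_0$ is an integer. But $t_0=\binom{m-2}{2}/2$ is a half-integer for infinitely many $m$ (e.g.\ $m=8$ gives $t_0=7.5$, and $p(8,8)-p(8,7)=43-40=3$). A cost-$3$ solution cannot be guaranteed there --- stacking all of $C$ on a single vertex of $V_2(r)$ admits no $r$-solution of cost less than $4$ --- so your one-target-at-a-time induction breaks exactly at $t=\lceil t_0\rceil$ when $t_0\notin\zZ$. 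This is precisely why the paper isolates the case $t_0<t=\lceil t_0\rceil$ and handles it with the global argument rather than by peeling off one cheap solution. (Your structural bound on configurations with no cost-$\le 4$ solution in the regime $t-1\ge t_0$ is fine and essentially reproduces the paper's potential computation for $t>\lceil t_0\rceil$.)

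Second, and more seriously, the main regime $2\le t\le t_0$ is not proved: you correctly observe that the increment is only $2$, that cost-$2$ solutions generally do not exist, and that one must therefore solve all targets simultaneously by transporting potentials --- paying one potential per target reachable by a slide and two per target that is not --- but you then explicitly defer ``making this potential-versus-transport accounting rigorous.'' That accounting is the theorem. The paper's resolution is quantitative: take $t'=\min\{t,2t_0\}$ Menger-disjoint paths from a multiset of $t'$ potentials to the targets (using $\k(G)=2t_0$), let $s$ be the number of these that are slides, and prove that if only $s$ are slides then the configuration has at least $2t_0-s+1$ zeroes; feeding this into the Potential Lemma gives $\pot(C)-s\ge 2(t-s)$, so the $s$ slide-targets cost one potential each and every remaining target can be solved from two potentials via Fact \ref{f:CommonNeighbors}. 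Nothing in your outline substitutes for the zero-counting inequality $z\ge 2t_0-s+1$, and without it the claim that the ``spread'' configuration has enough potential to cover the two-for-one targets is unsupported. You would need to supply this step (or an equivalent) for the proof to go through.
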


\begin{proof}
We prove the upper bound using induction on $t$.
Because of Theorem \ref{t:KneserClass0}, the statement is true for $t=1$, so we assume that $t\ge 2$.
Let $C$ be a configuration on $G$ of size $p(m,t)$.
If some vertex $r$ has $C(r)>0$ and $D(r)>0$, then a pebble on $r$ solves a target with cost 1.
Since $|C-r|=|C|-1>p(m,t-1)$, we know by induction that $C-r$ solves $D-r$.
Hence we may assume that no pebbles of $C$ already sit on any target of $D$.

From Fact \ref{f:CommonNeighbors}, we see that, for any $r$, every pair of potentials yields an $r$-solution when $m>5$.
Indeed, if one of the potentials is in $V_1(r)$ then it solves $r$ immediately.
Otherwise both are in $V_2(r)$ and have a common neighbor to move two pebbles to, solving $r$ subsequently.
Notice that both of these solutions are greedy, and hence cheap.

Define $\dP$ to be the multiset of vertices on which the $\pot(C)$ potentials of $C$ sit, and $Z$ to be the set of $z$ zeroes.
Note that $\pot(C)\ge t$ by Lemma \ref{l:potlem}(\ref{potlem1}).
Set $t'=\min\{t,2t_0\}$, and let $\dP'$ be any submultiset of $\dP$ of size $t'$, and $\dD'$ be any submultiset of $\dD$ of size $t'$.
By Corollary \ref{c:KneserConn} and Theorem \ref{t:MengerGen}, since $\k(G)=2t_0$, $G$ has $t'$ internally disjoint paths $\cP$ from $\dP'$ to $\dD'$.

Now define $s$ to be the number of paths of $\cP$ that are slides.
If $s=t$ then of course we are done, so we assume that $s<t$.
Then $(t' - s)$ of the paths of $\cP$ are not slides, and therefore have zeroes on them.
Let $z = |Z|$ and note that $z \ge 2t_0 - s + 1$.
Indeed, let $Z'=Z-\dD'$, $G'=G-Z$, and suppose that $|Z'| \le 2t_0 - s - 1$.
Then $\k(G') \ge \k(G) - |Z'| \ge 2t_0 - (2t_0 - s - 1) = s+1$.
Thus there are at least $s + 1$ paths from $\dP'$ to $\dD'$ --- these are all slides of $C$ in $G$.
Hence, if there are exactly $s$ slides of $C$ in $G$, then $|Z'| \ge 2t_0 - s$.
Therefore $|Z| = |Z'| + s(D') \ge (2t_0 - s) + 1 = 2t_0 - s + 1$.
We will show that the remaining potential after using $s$ slides is large enough to solve the remaining $t - s$ solutions.

We first consider the case when $t \le t_0$, which implies that $t'=t$.
Furthermore, it forces $2\le t_0=\binom{m-2}{2}/2$, which requires $m>5$.
In this case we have $|C| = p_1(m,t) = n+2t-2$.
Then $s = \lfloor (s+1)/2\rfloor + \lfloor s/2\rfloor \ge \lfloor (s+1)/2\rfloor$, and so
\begin{align*}
    \pot(C) - s 
    &\ge \Big\lceil \big[(n+2t-2) - n + (2t_0 - s + 1)\big] / 2  \Big\rceil - s \\
    &= \big\lceil (2t + 2t_0 - s - 1) / 2  \big\rceil - s \\
    &\ge t + t_0 - s - \big\lfloor (s + 1)/ 2 \big\rfloor \\
    &\ge 2t - s - s\\
    &\ge  2(t - s)\ .
\end{align*} 
Thus, after solving $s$ slides, we have at least $2(t - s)$ potential.
As noted above, for each remaining target, we can solve it from any pair of potentials, consequently solving all $t - s$ remaining targets from the at least $2(t-s)$ remaining potentials.

Second, we consider the case when $t_0 < t = \lceil t_0\rceil$ (still we have $t'=t$).
In this case we have $|C| = p_2(m,t) = 4t+2m-5$.
Then because $n = |\{r\}| + |V_1(r)| + |V_2(r)| = 1 + \binom{m-2}{2} + 2(m-2)$ for any $r$, we have
\begin{align*}
    \pot(C) - s 
    &\ge \Big\lceil \big[(4t+2m-5) - n + (2t_0 - s + 1)\big] / 2  \Big\rceil - s \\
    &= \Big\lceil \big[4t -\big(n-1-2t_0-2(m-2)\big) -(s+1)\big] / 2  \Big\rceil - s \\
    &= 2t - \lfloor (s+1) / 2  \rfloor - s\\
    &\ge 2t - s - s\\
    &\ge  2(t - s)\ .
\end{align*} 
Thus, after solving $s$ slides, we have at least $2(t - s)$ potential, which gives us the $t - s$ remaining solutions, as before.
        
Finally, we consider the case when $t > \lceil t_0\rceil$.
In this case we have $|C| = p_2(m,t) = 4t + 2m - 5$.
Then 
\begin{align*}
    \pot(C) 
    &\ge \Big\lceil \big([4t + 2m - 5] - n + 1\big) / 2 \Big\rceil \\
    &= \Big\lceil \left(4t - \big[n - 2(m-2)\big]\right) / 2 \Big\rceil\\
    &\ge \left\lceil \frac{1}{2}\left(4t - \left[\binom{m-2}{2} + 1\right]\right) \right\rceil\\
    &\ge \left\lceil \frac{1}{2}\left(2t + 2t - \binom{m-2}{2} - 1\right) \right\rceil\\
    &\ge \left\lceil \frac{1}{2}\left(2t - 1 + \left[2t - \binom{m-2}{2}\right]\right) \right\rceil\\
    &\ge \lceil (t - 1/ 2) + (t-t_0) \rceil\\
    &\ge \lceil (t + 1/ 2) \rceil\\
    &> t\\
    &\ge 2.
\end{align*}
As noted above, $\pot(C) \ge 2$ yields a cheap $r$-solution $\s$ for any $r\in\dD$. 
After using $\s$, the remaining configuration $C'$ has size at least $|C|-4=p_2(m,t-1)$.
Thus, we can use induction on $t$ to get from $C'$ the $t-1$ remaining solutions of $D-r$. 
\end{proof}

% ---------------------------
\subsection{Verification of the Target Conjecture}

\begin{cor}
\label{c:KneserFormula}
Let $G= K(m,2)$ for $m \ge 5$.
Then $\pi_t(G) = p(m,t)$.
\end{cor}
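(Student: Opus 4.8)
This corollary is an assembly of the preceding lemmas, so I expect no real obstacle; the write-up will simply combine a matching lower and upper bound. For the lower bound, Lemma \ref{l:KneserLowerBound} already gives $\pi_t(G) \ge p(m,t)$ for every $m \ge 5$, so nothing more is needed there.

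\textbf{Upper bound.} The plan is to recall that $\pi_t(G) = \max_{r \in V(G)} \pi_t(G,r)$, and that $\pi_t(G,r) = \pi(G,D_r)$, where $D_r$ denotes the target distribution of size $t$ stacked entirely on $r$. For $m = 5$, applying Lemma \ref{l:PetersenUpperBound} to $D = D_r$ gives $\pi(G,D_r) \le p(5,t)$; for $m > 5$, applying Lemma \ref{l:KneserUpperBound} to $D = D_r$ gives $\pi(G,D_r) \le p(m,t)$. In either case this holds for every vertex $r$, so taking the maximum over $r$ yields $\pi_t(G) \le p(m,t)$. Together with the lower bound this gives $\pi_t(G) = p(m,t)$, as claimed.

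\textbf{Remark to include.} Since Lemmas \ref{l:PetersenUpperBound} and \ref{l:KneserUpperBound} actually bound $\pi(G,D)$ by $p(m,t)$ for \emph{every} target distribution $D$ of size $t$ --- not just the stacked distributions $D_r$ --- the identity $\pi_t(G) = p(m,t)$ proved here shows that $\pi(G,D) \le p(m,|D|) = \pi_{|D|}(G)$ for all $D$. Thus the same two lemmas simultaneously verify the Target Conjecture (Conjecture \ref{c:tTarget}) for the family $K(m,2)$, which is the content of Theorem \ref{t:Kneser}. I would state this consequence explicitly right after the proof of the corollary, so no separate argument is required.
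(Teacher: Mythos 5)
Your proposal is correct and matches the paper's proof, which likewise combines the lower bound of Lemma \ref{l:KneserLowerBound} with the upper bounds of Lemma \ref{l:PetersenUpperBound} (for $m=5$) and Lemma \ref{l:KneserUpperBound} (for $m>5$) applied to stacked distributions. Your closing remark is also exactly how the paper derives Theorem \ref{t:Kneser}.
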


\begin{proof}
For $m=5$ this follows from Lemmas \ref{l:KneserLowerBound} and \ref{l:PetersenUpperBound}.
For $m>5$ this follows from Lemmas \ref{l:KneserLowerBound} and \ref{l:KneserUpperBound}.
\end{proof}

\begin{thm}
\label{t:Kneser}
Let $G=K(m,2)$ and $|D|=t$.
Then for all $D$, $\p(G,D)\le \p_t(G)$.
\end{thm}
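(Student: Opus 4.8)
The plan is to obtain the theorem as a short corollary of the work already done in this section: the exact formula for $\pi_t(K(m,2))$ and the matching upper bound on $\pi(K(m,2),D)$ for arbitrary demands $D$. First I would note that we may assume $m\ge 5$, since for $m\le 4$ the graph $K(m,2)$ is disconnected (or empty) and its pebbling numbers are undefined. Fix such an $m$ and a demand $D$ of size $t$.

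Next I would split on $m=5$ versus $m>5$. For $m=5$, Lemma \ref{l:PetersenUpperBound} gives $\pi(G,D)\le p(5,t)$; for $m>5$, Lemma \ref{l:KneserUpperBound} gives $\pi(G,D)\le p(m,t)$. In either case Corollary \ref{c:KneserFormula} supplies the reverse-direction fact $\pi_t(G)=p(m,t)$. Chaining these, $\pi(G,D)\le p(m,t)=\pi_t(G)$, which is exactly the Target Conjecture for $K(m,2)$. That completes the argument; the theorem is essentially a repackaging of Corollary \ref{c:KneserFormula} and the two upper-bound lemmas.

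All the difficulty lies upstream, in Lemma \ref{l:KneserUpperBound}, not in the deduction above. If one had to reprove that lemma, the crux would be showing that every configuration $C$ of size $p(m,t)$ solves every demand $D$ of size $t$. The engine is: (i) the Potential Lemma \ref{l:potlem}(\ref{potlem1}) forces $\pot(C)\ge t$; (ii) the full connectivity $\kappa(K(m,2))=2t_0$ from Corollary \ref{c:KneserConn}, fed into the generalized Menger theorem \ref{t:MengerGen}, routes internally disjoint paths from a chosen submultiset of potentials to a chosen submultiset of targets; (iii) Fact \ref{f:CommonNeighbors} shows that any two potentials jointly give a (greedy, hence cheap) solution for any target when $m>5$; and (iv) a careful count — organized into the cases $t\le t_0$, $t=\lceil t_0\rceil$, and $t>\lceil t_0\rceil$ according to whether $C_{t,1}$ or $C_{t,2}$ is the extremal unsolvable configuration — establishing that after realizing $s<t$ of the Menger paths as genuine slides, the residual potential is at least $2(t-s)$, enough to finish the remaining targets two potentials at a time (or, in the last case, to peel off one target cheaply and induct on $t$). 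The subtle point is that the Menger paths need not all be slides, so one must separately lower-bound the number of zeroes by $z\ge 2t_0-s+1$ via a ``connectivity after vertex deletion'' estimate and carry that bound through the potential arithmetic; that bookkeeping, rather than the final assembly, is the real obstacle.
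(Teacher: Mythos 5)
Your deduction is exactly the paper's: Theorem \ref{t:Kneser} is obtained by combining Lemmas \ref{l:PetersenUpperBound} and \ref{l:KneserUpperBound} with Corollary \ref{c:KneserFormula}, and your summary of the upstream machinery in Lemma \ref{l:KneserUpperBound} accurately reflects how that lemma is proved. This matches the paper's argument, so there is nothing to add.
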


\begin{proof}
This follows from Lemmas \ref{l:PetersenUpperBound} and \ref{l:KneserUpperBound} and from Corollary \ref{c:KneserFormula}.
\end{proof}

% ================================
% ================================
\section{Remarks}

A natural next step for verifying the Target Conjecture would be to consider $k$-paths for $k\ge 3$.
However, the $t$-fold pebbling numbers for this family are not presently known.
In \cite{AlcoHurl} the subfamily of $k^{\rm th}$ powers of paths is studied.
Additional interesting families to investigate include diameter two graphs, Class 0 graphs, and chordal graphs, among others.
Unfortunately, $t$-fold pebbling numbers are not known for such broad classes, so more specific subclasses need to be examined.

\end{document}